\numberwithin{equation}{section}
\newtheorem{theorem}{Theorem}[section]
\newtheorem{lemma}[theorem]{Lemma}
\begin{document}

\title[Solutions of differential equations]{On entire function $e^{p(z)}\int_0^{z}\beta(t)e^{-p(t)}dt$ with applications to Tumura--Clunie equations and complex dynamics}

\author{Yueyang Zhang}
\address{School of Mathematics and Physics, University of Science and Technology Beijing, No.~30 Xueyuan Road, Haidian, Beijing, 100083, P.R. China}
\email{zyynszbd@163.com}
\thanks{The author is supported by a Project funded by China Postdoctoral Science Foundation~{(2020M680334)} and the Fundamental Research Funds for the Central Universities~{(FRF-TP-19-055A1)}.}



\subjclass[2010]{Primary 30D35; Secondary 34M10, 34C10}

\keywords{Nevanlinna theory; Differential equation; Entire solutions; Oscillation; Complex dynamics}

\date{\today}

\commby{}

\begin{abstract}
Let $p(z)$ be a nonconstant polynomial and $\beta(z)$ be a small entire function of $e^{p(z)}$ in the sense of Nevanlinna. We describe the growth behavior of the entire function $H(z):=e^{p(z)}\int_0^{z}\beta(t)e^{-p(t)}dt$ in the complex plane $\mathbb{C}$. As an application, we solve entire solutions of Tumura--Clunie type differential equation $f(z)^n+P(z,f)=b_1(z)e^{p_1(z)}+b_2(z)e^{p_2(z)}$, where $b_1(z)$ and $b_2(z)$ are nonzero polynomials, $p_1(z)$ and $p_2(z)$ are two polynomials of the same degree~$k\geq 1$ and $P(z,f)$ is a differential polynomial in $f$ of degree $\leq n-1$ with meromorphic functions of order~$<k$ as coefficients. These results allow us to determine all solutions with relatively few zeros of the second-order differential equation $f''-[b_1(z)e^{p_1(z)}+b_2(z)e^{p_2(z)}+b_3(z)]f=0$, where $b_3(z)$ is a polynomial. We also prove a theorem on certain first-order linear differential equation related to complex dynamics.

\end{abstract}

\maketitle


\section{Introduction}\label{introduction}
In this paper, a meromorphic function always means a function meromorphic in the complex plane~$\mathbb{C}$. We assume that the readers are familiar with the standard notation and basic results of Nevanlinna theory; see, e.g., \cite{Hayman1964Meromorphic,Laine1993}. Moreover, we say that a function $\gamma(z)\not\equiv\infty$ is a \emph{small} function of $f$ if $T(r,\gamma)=S(r,f)$, where $S(r,f)$ denotes any quantity satisfying $S(r,f)=o(T(r,f))$, $r\rightarrow{\infty}$, possibly outside an exceptional set of finite linear measure. For a \emph{differential polynomial} $P(z,f)$ in $f$, we mean a finite sum of monomials in $f$ and its derivatives with the form
\begin{equation*}
P(z,f)=\sum_{j=1}^{n}a_{j}f^{n_{j0}}(f')^{n_{j1}}\cdots(f^{(k)})^{n_{jk}},
\end{equation*}
where $n_{j0},\cdots,n_{jk}\in \mathbb{N}$ and the coefficients $a_j$ are small functions of $f$. We define the \emph{degree} of $P(z,f)$ to be the greatest integer of $d_j:=\sum_{l=0}^kn_{jl}$, $1\leq j\leq n$, and denote it by $\deg_f(P(z,f))$.

A generalization of the theorem of Tumura--Clunie \cite{Clunie1962meromorphic,Tumura1937} given by Hayman~\cite[p.~69]{Hayman1964Meromorphic} states that: If a nonconstant meromorphic function $f(z)$ satisfies $f(z)^n+P(z,f)=g(z)$, where $n\geq 2$ is an integer and $P(z,f)$ is a differential polynomial in $f$ of degree $\leq n-1$ with small functions of $f$ as coefficients, and $N(r,f)+N(r,1/g)=S(r,f)$, then there is a small function $\gamma(z)$ of $f(z)$ such that $(f(z)-\gamma(z))^n=g(z)$. In~\cite{Liping2008,Liping2011,LipingYang2006}, Li and his coauthor considered the equation
\begin{equation}\label{EQ1intro}
f^n+P(z,f)=b_1e^{\alpha_1z}+b_2e^{\alpha_2z},
\end{equation}
where $P(z,f)$ is a differential polynomial in $f$ of degree $\leq n-1$ with small functions of $f$, $b_1$, $b_2$, $\alpha_1$ and $\alpha_2$ are nonzero constants. They solved meromorphic solutions such that $N(r,f)=S(r,f)$ of \eqref{EQ1intro} under assumptions that $\deg_f(P(z,f))\leq n-2$ or that $\deg_f(P(z,f))=n-1$ and $\alpha_1+\alpha_2=0$ or $0<\alpha_2/\alpha_1\leq(n-1)/n$. Li~\cite{Liping2011} further asked how to solve the general solutions of~\eqref{EQ1intro}. Denote by $\rho(f)$ the order of a meromorphic function $f(z)$. Assuming that in \eqref{EQ1intro} all coefficients of $P(z,f)$ have order less than $\rho(f)$, Zhang, Gao and Zhang~\cite{zhang2021} showed that if $\alpha_2/\alpha_1$ is real and \eqref{EQ1intro} has an entire solution $f$, then $\alpha_2/\alpha_1$ must be equal to $-1$ or a positive rational number and in either case $f$ is a linear combination of exponential functions with certain coefficients plus some entire function of order less than~$1$. We note that an application of the method in \cite{Ishizalikazuya19971} to equation \eqref{EQ1intro} will yield that \eqref{EQ1intro} cannot have entire solutions when $\alpha_2/\alpha_1$ is not real and thus~\cite[Theorem~1.1]{zhang2021} actually gives a complete answer to Li's question in the entire solution case.

The main purpose of this paper is to provide a generalization of~\cite[Theorem~1.1]{zhang2021} by solving entire solutions of \eqref{EQ1intro} when the right-hand side is replaced by $b_1(z)e^{p_1(z)}+b_2(z)e^{p_2(z)}$, where $b_1(z)$ and $b_2(z)$ are now nonzero polynomials, $p_1(z)$ and $p_2(z)$ are two nonconstant polynomials of the same degree~$k$ with distinct leading coefficients, respectively; see Theorem~\ref{maintheorem1} in section~\ref{Application to Tumura--Clunie differential equations}. To this end, we first need to give a generalization of \cite[Lemma~2.3]{zhang2021} on first-order differential equations. Consider the differential equation
\begin{equation}\label{EQ0}
f'-\kappa f=\beta,
\end{equation}
where $\kappa$ is a nonzero polynomial and $\beta$ is an entire small function of $e^{p(z)}$, where $p(z)$ is a primitive function of $\kappa$. By elementary integration, the general solution of \eqref{EQ0} is $f=ce^{p(z)}+H(z)$ with
\begin{equation}\label{Hfunction}
H(z)=e^{p(z)}\int_0^{z}\beta(t) e^{- p(t)}dt.
\end{equation}
The function $H(z)$ with $\beta$ being a polynomial has frequently appeared in the study of
differential equations and complex dynamics; see, e.g., \cite{Banklangley1987,Baker1984,Bergweiler1992,Bergweiler1993,Haruta1999,Wolff2021}. Below we describe the growth behavior of $H(z)$ in the complex plane~$\mathbb{C}$.

We write $p(z)=\alpha z^{k}+p_{k-1}(z)$, where $\alpha=a+ib\not=0$ with $a,b$ real and $p_{k-1}(z)$ is a polynomial of degree at most $k-1$. Denote
\begin{equation}\label{expon1}
\delta(p,\theta)=a\cos k\theta-b\sin k\theta, \quad \theta\in[0,2\pi).
\end{equation}
Let $\varepsilon>0$ be given and small. Then on the ray $z=re^{i\theta}$, $r\geq 0$, we have: (1) if $\delta(p,\theta)>0$, then there exists an $r_0=r_0(\theta)$ such that $\log |e^{p(re^{i\theta})}|$ is increasing on $[r_0,\infty)$ and $|e^{p(re^{i\theta})}|\geq e^{(1-\varepsilon)\delta(p,\theta)r^{k}}$ there; (2) if $\delta(p,\theta)<0$, then there exists an $r_0=r_0(\theta)$ such that $\log |e^{p(re^{i\theta})}|$ is decreasing on $[r_0,\infty)$ and $|e^{p(re^{i\theta})}|\leq e^{(1-\varepsilon)\delta(p,\theta)r^{k}}$ there; see \cite{Banklangley1987} or \cite[Lemma~5.14]{Laine1993}. Let $\theta_j\in [0,2\pi)$, $j=1,2, \cdots, 2k$, be such that $\delta(p,\theta_j)=0$. We may suppose that $\theta_1< \pi$ and $\theta_1<\theta_2<\cdots<\theta_{2k}$. Then $\theta_j=\theta_1+(j-1)\pi/k$. Moreover, denoting $\theta_{2k+1}=\theta_1+2\pi$, we have $2k$ sectors $S_j$ defined as
\begin{equation}\label{expon10}
S_j=\left\{re^{i\theta}:\, 0\leq r<\infty, \quad \theta_j< \theta<\theta_{j+1}\right\}, \quad j=1,\cdots,2k.
\end{equation}
Moreover, for any $\epsilon>0$, we denote
\begin{equation}\label{expon10 fujia}
\overline{S}_{j,\epsilon}=\{re^{i\theta}:\, 0\leq r<\infty, \quad \theta_j+\epsilon\leq\theta\leq\theta_{j+1}-\epsilon\}, \quad j=1,\cdots,2k.
\end{equation}
Denote by $\overline{S}_j$ and $\overline{S}_{j,\epsilon}$ the closure of $S_j$ and $S_{j,\epsilon}$, respectively. We have

\begin{theorem}\label{maintheorem0}
Suppose that $p(z)$ is polynomial with degree $k\geq 1$ and that $\beta$ is a small entire function of $e^{p(z)}$.
Then for each $S_j$ where $\delta(p,\theta)>0$, there is a constant $a_j$ such that $|H(re^{i\theta})-a_je^{p(re^{i\theta})}|\leq e^{o(1)r^{k}}$ uniformly as $r\to\infty$ in $\overline{S}_{j,\epsilon}$; for each $S_j$ where $\delta(p,\theta)<0$, $|H(re^{i\theta})-ae^{p(re^{i\theta})}|\leq e^{o(1)r^{k}}$ uniformly as $r\to\infty$ in $\overline{S}_{j}$ for any constant $a$.
\end{theorem}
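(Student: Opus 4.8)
The plan is to write $H(z)=e^{p(z)}G(z)$ with $G(z)=\int_0^z\beta(t)e^{-p(t)}\,dt$ and to estimate $G$ by contour integration on each ray, exploiting the exact asymptotics $\operatorname{Re}p(re^{i\theta})=\delta(p,\theta)r^{k}+O(r^{k-1})$, where the $O$-term is uniform in $\theta$ because it comes from the fixed polynomial $p_{k-1}$. Since $\beta$ is a small function of $e^{p}$ and $T(r,e^{p})=O(r^{k})$, we have $T(r,\beta)=o(r^{k})$, and the standard bound $\log^+ M(r,\beta)\le 3T(2r,\beta)$ (with the usual handling of the exceptional set) gives $\log M(r,\beta)=o(r^{k})$; hence $|\beta(se^{i\theta})|\le e^{o(s^{k})}$ uniformly in $\theta$. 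These two facts about $p$ and $\beta$ are the only inputs I will use.

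For a sector $S_j$ with $\delta(p,\theta)>0$, I would first define the constant $a_j$. Fix a direction $\theta^\ast\in(\theta_j,\theta_{j+1})$ and set $a_j=\int_0^{\infty e^{i\theta^\ast}}\beta e^{-p}\,dt$; this integral converges because along the ray $|\beta e^{-p}|\le e^{o(s^k)-\delta(p,\theta^\ast)s^k+O(s^{k-1})}\le e^{-\frac12\delta(p,\theta^\ast)s^k}$ for large $s$. Applying Cauchy's theorem to the region bounded by two such rays and a circular arc of radius $R$, and noting that the arc integral is $\le R(\theta''-\theta')e^{-cR^k}\to 0$ for some $c>0$, shows that $a_j$ is independent of the chosen direction in $S_j$. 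Then for $z=re^{i\theta}\in\overline{S}_{j,\epsilon}$ I write $G(z)-a_j=-\int_z^{\infty e^{i\theta}}\beta e^{-p}\,dt$ and bound the tail: setting $\eta(r)=\sup_{s\ge r}s^{-k}\log M(s,\beta)=o(1)$, the integrand is at most $e^{-(\delta(p,\theta)-\eta(r))s^k+O(s^{k-1})}$ for $s\ge r$, and since $\delta(p,\theta)\ge\delta_0>0$ on $\overline{S}_{j,\epsilon}$ a routine monotonicity argument (the exponent has derivative $\gtrsim s^{k-1}$) yields $|G(z)-a_j|\le e^{-\delta(p,\theta)r^k+o(r^k)}$, uniformly in $\theta$. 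Multiplying by $|e^{p(z)}|=e^{\delta(p,\theta)r^k+O(r^{k-1})}$, the leading terms cancel and $|H(z)-a_je^{p(z)}|\le e^{o(1)r^k}$ uniformly on $\overline{S}_{j,\epsilon}$.

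For a sector $S_j$ with $\delta(p,\theta)<0$ I would instead integrate $G$ straight out along the ray from $0$ to $z=re^{i\theta}$. Now $-\delta(p,\theta)\ge0$ on the whole closed sector $\overline{S}_j$ (including the boundary rays, where $\delta=0$), so the integrand $|\beta e^{-p}|\le e^{o(s^k)-\delta(p,\theta)s^k+O(s^{k-1})}$ is eventually increasing in $s$ and the integral is controlled by its endpoint, giving $|G(z)|\le e^{-\delta(p,\theta)r^k+o(r^k)}$ uniformly. Multiplying by $|e^{p(z)}|=e^{\delta(p,\theta)r^k+O(r^{k-1})}$ again cancels the leading term and yields $|H(z)|\le e^{o(1)r^k}$. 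Since $\delta(p,\theta)\le0$ here, $|a e^{p(z)}|\le|a|e^{O(r^{k-1})}=e^{o(1)r^k}$ for any constant $a$, so the triangle inequality gives $|H(z)-ae^{p(z)}|\le e^{o(1)r^k}$ uniformly on the full closed sector $\overline{S}_j$.

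The main obstacle is the exact cancellation of the $\delta(p,\theta)r^k$ terms: a crude tail bound such as $e^{-\frac12\delta r^k}$ would only give $|H-a_je^{p}|\le e^{\frac12\delta r^k}$, which is far too weak. The point is that the smallness of $\beta$, encoded in $\eta(r)=o(1)$, lets the exponent loss be absorbed into $o(r^k)$, so that the surviving term is exactly $o(r^k)$. Keeping every error uniform in $\theta$—in particular the $O(r^{k-1})$ from $p_{k-1}$ and the $o(r^k)$ from $\log M(r,\beta)$, neither of which depends on $\theta$—is what upgrades the ray-by-ray estimates to the uniform statements; and the need to keep $\delta(p,\theta)\ge\delta_0>0$ in the convergence and monotonicity arguments is precisely why the increasing sectors must be shrunk to $\overline{S}_{j,\epsilon}$ while the decreasing ones need not be.
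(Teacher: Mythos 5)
Your argument is correct and is essentially the proof the paper has in mind: the paper omits the details, referring to the same ray-integration argument (Lemma~2.3 of the cited work of Zhang--Gao--Zhang together with the standard growth estimates for $e^{p}$ on rays), which is exactly what you carry out --- defining $a_j$ as the limit of $\int_0^z\beta e^{-p}\,dt$ in each sector with $\delta(p,\theta)>0$, bounding the tail there, and integrating outward from the origin where $\delta(p,\theta)\le 0$. Your accounting of where the shrinkage to $\overline{S}_{j,\epsilon}$ is genuinely needed (only in the increasing sectors, to keep $\delta(p,\theta)\ge\delta_0>0$) and of how the smallness $\log M(r,\beta)=o(r^k)$ absorbs the loss into the $e^{o(1)r^k}$ error matches the intended reasoning.
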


By the same arguments as in \cite[Lemma~2.3]{zhang2021} together with the growth properties of $e^{p(z)}$ mentioned before, we easily prove Theorem~\ref{maintheorem0}. We omit the proof. Here we give two remarks: First, if $\rho=\max\{\rho(\beta),k-1\}<k$, then the error term $e^{o(1)r^k}$ in Theorem~\ref{maintheorem0} can be replaced by $e^{r^{\eta}}$ for a constant $\eta>\rho$. Second, Theorem~\ref{maintheorem0} easily extends to the case where the function $\beta(z)$ in \eqref{EQ0} is meromorphic and has at most finitely many poles. If the solution $f$ of equation \eqref{EQ0} is meromorphic, then there is a rational function $R(z)$ such that $R(z)\to 0$ as $z\to\infty$ and $h(z)=f(z)-R(z)$ is entire. It follows that $f(z)=h(z)+R(z)$ and $h$ satisfies $h'-\kappa h=\beta-(R'-\kappa R)$ and $\beta_1=\beta-(R'-\kappa R)$ is a small entire function of $e^{p(z)}$.

The rest of this paper is organized in the following way. In section~\ref{Application to Tumura--Clunie differential equations}, we apply Theorem~\ref{maintheorem0} to Tumura--Clunie type differential equations and provide a generalization of~\cite[Theorem~1.1]{zhang2021}. Our results also improve~\cite[Theorem~1]{LiaoYangZhang2013}. Then we use our results to determine all solutions with relatively few zeros of the second-order linear differential equation: $g''-(b_1e^{p_1}+b_2e^{p_2}+b_3)g=0$, where $b_j(z)$, $j=1,2,3$, are polynomials, $p_1(z)$ and $p_2(z)$ are two polynomials of the same degree with distinct leading coefficients. In section~\ref{Application to complex dynamics}, we consider the first-order linear differential equation $f'-R_1(z) e^{q(z)}f=R_2(z)$ with a polynomial $q(z)$ and two rational functions $R_1(z)$ and $R_2(z)$ and show that $q(z)$ must be a constant when $f$ is of finite order. This equation is related to a class of meromorphic functions appearing in complex dynamics.

\section{Tumura--Clunie differential equations}\label{Application to Tumura--Clunie differential equations}

Let $b_1(z)$ and $b_2(z)$ be two nonzero polynomials and $p_1(z)$ and $p_2(z)$ be two polynomials of the same degree~$k\geq 1$ with distinct leading coefficients $\alpha_1$ and $\alpha_2$, respectively. In this section, we solve entire solutions of the Tumura--Clunie type differential equation
\begin{equation}\label{EQ1}
f^n+P(z,f)=b_1e^{p_1}+b_2e^{p_2},
\end{equation}
where $n\geq 2$ and $P(z,f)$ is a differential polynomial in $f$ of degree $\leq n-1$ with meromorphic functions of order less than~$k$ as coefficients. In the following, a differential polynomial in $f$ will always have meromorphic functions of order less than~$k$ as coefficients and thus we will omit mentioning this. By doing a linear transformation $z\to z/\alpha_1^{1/k}$ or $z\to z/\alpha_2^{1/k}$, if necessary, we may suppose that the leading coefficients of $p_1$ and $p_2$ are $\alpha_1=1$ and $\alpha_2=\alpha$, respectively, and $|\alpha|\leq 1$. With these settings, we prove the following

\begin{theorem}\label{maintheorem1}
Let $n\geq2$ be an integer and $P(z,f)$ be a differential polynomial in $f$ of degree $\leq{n-1}$. Let $b_1$, $b_2$ be two nonzero polynomials and $p_1$, $p_2$ be two nonconstant polynomials of degree~$k$ with distinct leading coefficients $1$ and $\alpha$, respectively, and $p_1(0)=p_2(0)=0$. Suppose that \eqref{EQ1} has an entire solution~$f$. Then $\alpha$ is real and rational. Moreover,
\begin{itemize}
  \item [(1)] if $\alpha<0$, then $\alpha=-1$ and $f=\gamma+\gamma_1e^{p_1/n}+\gamma_2e^{p_2/n}$,
              where $\gamma$ is an entire function of order less than $k$ and $\gamma_1$, $\gamma_2$ are
              two polynomials such that $\gamma_1^n=b_1$, $\gamma_2^n=b_2$;
  \item [(2)] if $0<\alpha<1$, letting $m$ be the smallest integer such that $\alpha\leq[(m+1)n-1]/[(m+1)n]$,
              then $f=\gamma+\gamma_1\sum_{j=0}^mc_j(b_2/b_1)^je^{[jn(\alpha-1)+1]p_1/n}$, where $\gamma$ is a function of order less than~$k$ and $\gamma_1$ is a polynomial such that $\gamma_1^n=b_1$ and $c_0$, $\cdots$, $c_m$ are constants such that $c_0^{n}=1$ when $m=0$, and $c_0^{n}=nc_0^{n-1}c_1=1$ when $m=1$, and $c_0^{n}=nc_0^{n-1}c_1=1$ and $\sum_{\substack{j_0+\cdots+j_m=n,\\j_1+\cdots+mj_m=j}}\frac{n!}{j_0!j_1!\cdots j_m!}c_0^{j_0}c_1^{j_1}\cdots c_m^{j_m}=0$, $j=2,\cdots,m$, when $m\geq 2$. Moreover, we have $p_2=\alpha p_1$ when $m\geq 1$.
\end{itemize}
\end{theorem}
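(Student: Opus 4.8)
The plan is to combine a sectorial analysis of the asymptotics of $f$ with a descent through first-order linear equations of the form \eqref{EQ0}, using Theorem~\ref{maintheorem0} to control the non-exponential part. First I would fix the growth data: since the leading coefficients $1$ and $\alpha$ are distinct, $g:=b_1e^{p_1}+b_2e^{p_2}$ has order exactly $k$, and a Clunie-type / Valiron--Mohon'ko estimate applied to $f^n=g-P(z,f)$ (with $\deg_f(P)\le n-1$ and coefficients of order $<k$) gives $T(r,f)\sim\tfrac{1}{n}T(r,g)$, so $\rho(f)=k$. Next I would partition the plane by the critical rays $\delta(p_1,\theta)=0$, $\delta(p_2,\theta)=0$ and $\delta(p_1-p_2,\theta)=0$. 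On any closed subsector where $\delta(p_1,\theta)>\max\{0,\delta(p_2,\theta)\}$ the term $b_1e^{p_1}$ dominates $g$, while $|P(z,f)|$ is of order $|f|^{n-1}=o(|e^{p_1}|)$ there; hence $f^n=b_1e^{p_1}(1+o(1))$ and $f=\gamma_1e^{p_1/n}(1+o(1))$ for a fixed $n$-th root $\gamma_1$ of $b_1$. In particular $b_1$ (and, in case (1), $b_2$) must be an $n$-th power of a polynomial. That these local root-branches patch into one single-valued entire function across the critical rays of $p_1-p_2$ is exactly what excludes $\alpha\notin\mathbb{R}$: as in the method of \cite{Ishizalikazuya19971}, a non-real $\alpha$ places the dominant directions of $e^{p_1}$ and $e^{p_2}$ in a configuration incompatible with the $n$-th-root structure of a single entire function, so no entire solution exists. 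Thus $\alpha\in\mathbb{R}$, and as $|\alpha|\le1$, $\alpha\ne1$, either $\alpha<0$ or $0<\alpha<1$.

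The core is a descent that peels off exponential terms. Having isolated the leading term, I would set $f=\gamma_1e^{p_1/n}+f_1$ in \eqref{EQ1}: the contribution $\gamma_1^ne^{p_1}=b_1e^{p_1}$ cancels the first term on the right, and expanding by the multinomial theorem leaves the dominant balance $n\gamma_1^{n-1}e^{(n-1)p_1/n}f_1=b_2e^{p_2}+(\text{lower order})$. This forces $f_1$ to carry the exponent $[n(\alpha-1)+1]p_1/n$ and, comparing the full exponents rather than only their leading coefficients, forces $p_2=\alpha p_1$ (once $m\ge1$). Iterating --- subtract the new term, re-expand, and read off the next dominant exponential --- generates precisely the frequencies $[jn(\alpha-1)+1]p_1/n$, $j=0,1,2,\dots$. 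The exponent coefficient $jn(\alpha-1)+1$ stays positive exactly while $j\le m$, which is equivalent to the defining inequality $\alpha\le[(m+1)n-1]/[(m+1)n]$ for $m$; for $j=m+1$ it turns $\le0$, so that term is no longer $e^{p_1}$-dominant and the descent stops. When $0<\alpha<1$ this yields $f=\gamma+\gamma_1\sum_{j=0}^mc_j(b_2/b_1)^je^{[jn(\alpha-1)+1]p_1/n}$, while when $\alpha<0$ the $e^{p_1}$- and $e^{p_2}$-dominant sectors lie in opposite half-planes, so the only frequency relation compatible with a single entire $f$ matching both $b_1e^{p_1}$ and $b_2e^{p_2}$ is $\alpha=-1$, giving $f=\gamma+\gamma_1e^{p_1/n}+\gamma_2e^{p_2/n}$ with $\gamma_2^n=b_2$. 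In both cases the residual $\gamma:=f-(\text{exponential terms})$ is, by the sectorial estimates and Theorem~\ref{maintheorem0}, of order $<k$.

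To pin down the coefficients I would substitute the candidate $f$ into \eqref{EQ1}, expand $f^n$ by the multinomial theorem, and collect terms according to the distinct exponentials $e^{cp_1}$. Because the coefficients of $P$ and the remainder $\gamma$ have order $<k$ and $\deg_f(P)\le n-1$, the differential polynomial $P(z,f)$ contributes only to exponentials strictly below the top band and cannot disturb the matching of the leading frequencies. By the linear independence over functions of order $<k$ of the family $\{e^{cp_1}\}$ (a Borel/Steinmetz argument), matching the coefficient of $e^{p_1}$ gives $c_0^n=1$, matching $e^{p_2}=e^{\alpha p_1}$ gives $nc_0^{n-1}c_1=1$, and the vanishing of each intermediate exponential gives the symmetric conditions $\sum_{\substack{j_0+\cdots+j_m=n,\\j_1+\cdots+mj_m=j}}\frac{n!}{j_0!j_1!\cdots j_m!}c_0^{j_0}c_1^{j_1}\cdots c_m^{j_m}=0$ for $2\le j\le m$. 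The same independence principle shows that two incommensurable frequencies can never cancel, which is what forces $\alpha$ to be rational: only for rational $\alpha$ can the finitely many exponents produced by the descent close up under the $n$-fold sums appearing in $f^n$.

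The main obstacle will be twofold. First, making the sectorial asymptotics uniform and rigorous --- in particular controlling $f$ across the critical rays where $e^{p_1}$ and $e^{p_2}$ are comparable and in the sectors where both decay, and turning the heuristic ``no consistent branch'' into a genuine non-existence proof for non-real $\alpha$ --- is the delicate analytic heart, and is precisely where the two regimes of Theorem~\ref{maintheorem0} on $\overline{S}_{j,\epsilon}$ and on $\overline{S}_j$ must be deployed. Second, the descent must be controlled: one has to verify that each balance truly isolates an exponential of the predicted frequency with an error of strictly lower order (so that the small right-hand side hypothesis of Theorem~\ref{maintheorem0} is met at every stage), that the iteration terminates at exactly $j=m$, and that $P(z,f)$ never contaminates the top band. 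Once these are in place, the coefficient identities are routine multinomial bookkeeping.
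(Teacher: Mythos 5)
Your outline reproduces the correct shape of the result, but the central mechanism is a formal asymptotic descent, and that is where the argument has a genuine gap. Writing $f=\gamma_1e^{p_1/n}+f_1$ and reading off ``the dominant balance $n\gamma_1^{n-1}e^{(n-1)p_1/n}f_1=b_2e^{p_2}+\cdots$'' only gives information about $f_1$ ray by ray, in the sectors where $e^{p_1}$ dominates; it does not show that $f_1$ is \emph{globally} an exponential sum plus a function of order $<k$, nor that the next term has exactly the predicted frequency. An entire function can be $o(|e^{p_1/n}|)$ on the dominant sectors and still grow like $e^{cr^k}$ elsewhere, so ``subtract and iterate'' does not close. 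The paper avoids this by never peeling $f$ directly: it forms genuine global identities. In Case 1 it differentiates \eqref{EQ1}, eliminates $e^{p_1}$ and $e^{p_2}$ to get \eqref{1Eq2.2}--\eqref{1Eq3.1}, introduces $F_1=f'-(B_1/n)f$, $F_2=f'-(B_2/n)f$, proves via the lemma on the logarithmic derivative and Clunie's lemma that the combination $F_1F_2+\mu F_1+\nu F_2$ has order $\le\rho<k$, and then integrates the resulting \emph{first-order linear ODEs} $(F_2-\mu)'=\xi(F_2-\mu)$ exactly. In Case 2 it builds the tower $G_0=f'-\kappa_0f$, $G_j=G_{j-1}'-\kappa_jG_{j-1}$, shows $T(r,G_m)=O(r^\rho)$ by Nevanlinna estimates (Lemma~\ref{growthlemmapre}), and only then integrates back up using Theorem~\ref{maintheorem0} sector by sector, finishing with Phragm\'en--Lindel\"of to make $H_0$ globally of order $\le\rho$. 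This conversion of local asymptotics into global structure is the step your proposal assumes rather than supplies.

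Two further points are asserted rather than proved. The exclusion of non-real $\alpha$ is not a matter of ``incompatible branch configurations'': the paper's Lemma~\ref{reallemma} is a quantitative argument showing that if $\alpha\notin\mathbb{R}$ then $F_1$ and $F_2$ satisfy subexponential bounds on enough rays that Phragm\'en--Lindel\"of forces $\rho(F_1),\rho(F_2)<k$, whence $\rho(f)<k$ via $f=\frac{n}{B_2-B_1}(F_1-F_2)$, contradicting Lemma~\ref{growthlemmapre}. And in Case 1 your geometric reason for $\alpha=-1$ (``dominant sectors in opposite half-planes'') does not discriminate: for \emph{every} $\alpha<0$ the sectors $\delta(p_2,\theta)>0$ are exactly those where $\delta(p_1,\theta)<0$, so this cannot single out $\alpha=-1$. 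The actual mechanism is Borel's lemma applied after substituting the derived form $f=c_1b_1^{1/n}e^{p_1/n}+c_2b_2^{1/n}e^{p_2/n}+\nu-\mu$ back into \eqref{EQ1}: the cross-term exponentials $e^{(jp_1+(n-j)p_2)/n}$ can only be reconciled with the right-hand side when $\alpha=-1$. The coefficient identities and the rationality of $\alpha$ via exponent matching are fine as you describe them, but they sit on top of the missing global step.
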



We now apply Theorem~\ref{maintheorem1} to linear differential equations. In the second-order case, the linear differential equation
\begin{equation}\label{bank-laine}
g''+Ag=0,
\end{equation}
where $A$ is an entire function, has attracted much interest; see \cite[chapter~5]{Laine1993} and references therein. A famous conjecture concerning the zero distribution of solutions of \eqref{bank-laine} is known as the \emph{Bank--Laine conjecture}~\cite{Banklaine1982,Banklaine1982-2}: Let $g_1$ and $g_2$ be two linearly independent solutions of \eqref{bank-laine} and denote by $\lambda(g)$ the exponent of convergence of zeros
of~$g$. Is $\max\{\lambda(g_1),\lambda(g_2)\}=\infty$ whenever $\rho(A)$ is not an integer? Recently, this conjecture was disproved by Bergweiler and Eremenko \cite{Bergweilereremenko2017,Bergweilereremenko2019}. In their construction of the counterexamples, they started from the solutions of \eqref{bank-laine} when $A$ is a polynomial of $e^z$ of degree~2.

Suppose that the coefficient $A$ in \eqref{bank-laine} has the form $A=-[b_1(z)e^{p_1(z)}+b_2(z)e^{p_2(z)}+b_3(z)]$, where $b_1(z)$, $b_2(z)$ and $p_1(z)$, $p_2(z)$ are as in Theorem~\ref{maintheorem1} and $b_3(z)$ is a polynomial. If $\alpha$ is non-real or is real negative, then all non-trivial solutions of \eqref{bank-laine} satisfy $\lambda(f)=\infty$; see \cite{banklainelangely1989,Ishizalikazuya19971}. When $\alpha$ is positive, Ishizaki~\cite[Theorem~1]{Ishizaki19970} proved: If $0<\alpha<1/2$ or if $b_3\equiv0$ and $3/4<\alpha<1$, then all non-trivial solutions of \eqref{bank-laine} satisfy $\lambda(g)\geq k$. We will improve Ishizaki's result by showing that the condition $b_3\equiv0$ can be removed. In fact, with Theorem~\ref{maintheorem1} at our disposal, we are able to determine all solutions such that $\lambda(g)<k$ of equation \eqref{bank-laine} with $A$ above.

\begin{theorem}\label{maintheorem2}
Let $b_1$, $b_2$ and $b_3$ be polynomials such that $b_1b_2\not\equiv0$ and $p_1$, $p_2$ be two polynomials of degree~$k\geq 1$ with distinct leading coefficients $1$ and $\alpha$, respectively, and~$p_1(0)=p_2(0)=0$. Let $A=-(b_1e^{p_1}+b_2e^{p_2}+b_3)$. Suppose that \eqref{bank-laine} has a non-trivial solution such that $\lambda(g)<k$. Then $\alpha=1/2$ or $\alpha=3/4$. Moreover,
\begin{itemize}
\item [(1)]
if $\alpha=1/2$, then $p_2=p_1/2$, $g=\kappa e^{h}$, where $\kappa$ is a polynomial with simple roots only and $h$ satisfies $h'=\gamma_1e^{p_1/2}+\gamma$ with $\gamma_1$ and $\gamma$ being two polynomials such that $\gamma_1^2=b_1$, $2\gamma_1\gamma+\gamma_1'+\gamma_1p_1'/2+2\kappa'/\kappa\gamma_1=b_2$ and $\gamma^2+\gamma'+2\gamma\kappa'/\kappa+\kappa''/\kappa=b_3$;

\item [(2)]
if $\alpha=3/4$, then $p_1=z$, $p_2=3z/4$ and $g=e^{h}$, where $h$ satisfies $h'=-4c^2e^{z/2}+ce^{z/4}-1/8$ and $A=-(16c^2e^{z}-8c^3e^{3z/4}+1/64)$, where $c$ is a nonzero constant.
\end{itemize}
\end{theorem}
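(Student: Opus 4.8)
The plan is to turn the second-order linear equation into a first-order nonlinear one by the logarithmic derivative and then feed it into Theorem~\ref{maintheorem1}. Writing $f=g'/g$, the relation $g''=(b_1e^{p_1}+b_2e^{p_2}+b_3)g$ coming from \eqref{bank-laine} becomes
\begin{equation*}
f'+f^2=b_1e^{p_1}+b_2e^{p_2}+b_3,
\end{equation*}
which is exactly of the form $f^2+P(z,f)=b_1e^{p_1}+b_2e^{p_2}$ with $n=2$ and $P(z,f)=f'-b_3$, a differential polynomial of degree $1=n-1$ whose coefficients $1$ and $-b_3$ are polynomials, hence of order $0<k$. Since any solution of $g''=Ag$ with $A$ entire has only simple zeros (a double zero would force $g\equiv0$), the function $f$ has only simple poles, each of residue $1$, situated at the zeros of $g$, so that $N(r,f)=\overline{N}(r,1/g)=O(r^{\lambda(g)+\varepsilon})$. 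On the other hand, $T(r,f'+f^2)\le 4T(r,f)+S(r,f)$ while $b_1e^{p_1}+b_2e^{p_2}+b_3$ has order $k$, so $T(r,f)$ has order at least $k$; as $\lambda(g)<k$ this yields $N(r,f)=S(r,f)$.

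With $N(r,f)=S(r,f)$ in hand I would apply the meromorphic form of Theorem~\ref{maintheorem1}, the bound $N(r,f)=S(r,f)$ being all that its Tumura--Clunie mechanism requires, the function $\gamma$ now being allowed to be meromorphic of order $<k$ and to carry the poles of $f$. Non-real $\alpha$ and real negative $\alpha$ are dismissed at once, since for such $\alpha$ every non-trivial solution of \eqref{bank-laine} satisfies $\lambda(g)=\infty$ by \cite{banklainelangely1989,Ishizalikazuya19971}, contradicting $\lambda(g)<k$; and $\alpha>1$ reduces to $0<\alpha<1$ by interchanging $(b_1,p_1)$ with $(b_2,p_2)$ and rescaling. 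Thus $\alpha$ is rational, $0<\alpha<1$, and
\begin{equation*}
f=\gamma+\gamma_1\sum_{j=0}^{m}c_j\Big(\tfrac{b_2}{b_1}\Big)^{j}e^{[2j(\alpha-1)+1]p_1/2},\qquad \gamma_1^2=b_1,
\end{equation*}
with $m$, the $c_j$, and the relation $p_2=\alpha p_1$ (when $m\geq1$) as in Theorem~\ref{maintheorem1}(2).

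The decisive step is to substitute this expression into the Riccati equation and compare the coefficients of the distinct exponentials $e^{\nu p_1}$. The squared sum produces the frequencies $\nu=s(\alpha-1)+1$ for $s=0,1,\dots,2m$: the values $s=0,1$ reproduce $b_1e^{p_1}$ and $b_2e^{p_2}$, and the relations among the $c_j$ are precisely what makes the intermediate coefficients behave, while every $s\geq2$ gives a frequency that equals neither $1$ nor $\alpha$ and, being a genuine order-$k$ exponential, cannot be absorbed into the order-$<k$ part. For $m\geq2$ and $\alpha$ interior to its range the extreme frequency $2m(\alpha-1)+1$ lies in $(0,1/(m+1))$, matches none of $1$, $\alpha$, the base frequencies $\nu_j=j(\alpha-1)+1/2$, or any other product frequency, yet carries the nonzero coefficient $\propto c_m^2$, a contradiction; at the endpoint values the surviving coincidences force the several coefficient equations at the $\nu_j$ to over-determine the single function $\gamma$, again a contradiction. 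This leaves only $m=0$ with $\alpha=1/2$ and $m=1$ with $\alpha=3/4$. In either case the coefficient equation at the lowest base frequency expresses $\gamma$ as a rational function, so the poles of $f$, i.e.\ the zeros of $g$, are finite in number; writing $\gamma=\kappa'/\kappa+(\text{polynomial})$ with $\kappa$ the simple-rooted product of those zeros and reading off the remaining identities gives the stated relations among $b_1,b_2,b_3$ and the form $g=\kappa e^{h}$ when $\alpha=1/2$, respectively $g=e^{h}$ when $\alpha=3/4$, a degree count in the latter case forcing $k=1$.

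I expect the main obstacle to be twofold. First, rigorously licensing the passage from the entire statement of Theorem~\ref{maintheorem1} to the meromorphic solution $f=g'/g$, i.e.\ establishing that the conclusion survives under the sole hypothesis $N(r,f)=S(r,f)$ with $\gamma$ meromorphic (equivalently, showing a posteriori that $g$ has only finitely many zeros). Second, the combinatorial bookkeeping of the frequency comparison, in particular verifying through the relations among the $c_j$ that all spurious coefficients with $2\leq s\leq 2m$ are incompatible with $m\geq2$, so that only the threshold values $\alpha=1/2$ and $\alpha=3/4$ remain.
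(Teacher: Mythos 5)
Your reduction to Theorem~\ref{maintheorem1} is the right idea, but the way you set it up leaves a genuine gap. You take $f=g'/g$, which is meromorphic with poles at the zeros of $g$, and then you need a ``meromorphic form'' of Theorem~\ref{maintheorem1} valid under the sole hypothesis $N(r,f)=S(r,f)$. No such form is available: Theorem~\ref{maintheorem1} is stated and proved only for entire solutions, and you yourself identify this as the main obstacle without resolving it (your parenthetical ``equivalently, showing a posteriori that $g$ has only finitely many zeros'' is circular, since the finiteness of the zeros is itself a consequence of applying the theorem). The paper sidesteps this entirely with a standard factorization: write $g=\kappa e^{h}$ with $\kappa$ the canonical product over the zeros of $g$, so that $\rho(\kappa)=\lambda(g)<k$, and set $f=h'$. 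Then $f$ is \emph{entire} and satisfies $f^2+f'+2(\kappa'/\kappa)f+\kappa''/\kappa=b_1e^{p_1}+b_2e^{p_2}+b_3$, where now $P(z,f)=f'+2(\kappa'/\kappa)f+\kappa''/\kappa-b_3$ is a degree-$1$ differential polynomial whose coefficients have order $<k$, exactly as Theorem~\ref{maintheorem1} requires. This one-line change is what licenses the whole argument; without it your plan does not get off the ground.

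Your frequency bookkeeping in the case $1/2<\alpha<1$ is also not quite right. At the endpoint $\alpha=[2(m+1)-1]/[2(m+1)]$ (the only surviving subcase) the ``spurious'' frequencies $k_0(\alpha-1)+1$, $k_0=m+1,\dots,2m$, produced by squaring \emph{coincide} with the base frequencies $j(\alpha-1)+1/2$, $j=0,\dots,m-1$, so they do not carry isolated coefficients that must vanish outright; they mix with the linear terms, and the resulting system \eqref{uniqued}--\eqref{uniqued0} is consistent at first sight. The contradiction for $m\geq 2$ is not that $\gamma$ is over-determined (in fact $\gamma$, $\kappa$, $\gamma_1$ and $b_2/b_1$ are first shown to be constants, forcing $k=1$); it is a purely algebraic one among the constants $c_0,\dots,c_m$: the system forces $c_m/c_{m-1}=c_{m-1}/c_{m-2}=\cdots=c_1/c_0=:t$, and then $C_2=2c_0c_2+c_1^2=3t^2c_0^2=0$, which is impossible. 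You would need to supply an argument of this kind to rule out $m\geq2$; the mechanism you describe does not produce the contradiction.
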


\begin{proof}
We write $g=\kappa e^{h}$, where $h$ is an entire function and $\kappa$ is the canonical product from the zeros of $g$ and satisfies $\rho(\kappa)=\lambda(\kappa)<k$. By denoting $f=h'$, from \eqref{bank-laine} we have
\begin{equation}\label{linear eq1}
f^2+f'+2\frac{\kappa'}{\kappa}f+\frac{\kappa''}{\kappa}=b_1(z)e^{p_1(z)}+b_2(z)e^{p_2(z)}+b_3(z).
\end{equation}
By Theorem~\ref{maintheorem1} together with previous discussions we know that $\alpha$ is a positive rational number. Below we consider the two cases where $0<\alpha\leq 1/2$ and $1/2<\alpha<1$, respectively.

When $0<\alpha\leq 1/2$, by Theorem~\ref{maintheorem1} we may write $f=\gamma_1e^{p_1/2}+\gamma$, where $\gamma_1$ is a polynomial such that $\gamma_1^2=b_1$ and $\gamma$ is an entire function of order less than~$k$. Substitution into equation \eqref{linear eq1} gives
\begin{equation}\label{1Eq3.29 prev0}
\begin{split}
2\gamma_1\left(\frac{\kappa'}{\kappa}+\frac{1}{2}\frac{\gamma_1'}{\gamma_1}+\frac{p_1'}{4}+\gamma\right)e^{p_1/2}-b_2e^{p_2}+\gamma^2+\gamma'+2\gamma\frac{\kappa'}{\kappa}+\frac{\kappa''}{\kappa}-b_3=0.
\end{split}
\end{equation}
If $\alpha\not=1/2$, then by Borel's lemma (see {\cite[pp.~69--70]{yangy:03}}) we get $b_2\equiv0$, a contradiction. Therefore, $\alpha=1/2$. Then by rewriting equation \eqref{1Eq3.29 prev0} and applying Borel's lemma to the resulting equation again, we get $\gamma^2+\gamma'+2\gamma\kappa'/\kappa+\kappa''/\kappa=b_3$ and $2\gamma_1\gamma+\gamma_1'+\gamma_1p_1'/2+2\kappa'/\kappa\gamma_1=b_2e^{p_2-p_1/2}$. From the second relation we see that $\kappa$ has only finitely many zeros and thus we may suppose that $\kappa$ is a polynomial. Then from the first relation we see that $\gamma$ is also a polynomial and all zeros of $\kappa$ are simple. This implies that $p_2=p_1/2$.

When $1/2< \alpha<1$, by Theorem~\ref{maintheorem1} we may write $f=\gamma+\gamma_1\sum_{j=0}^mc_j(b_2/b_1)^je^{[2j(\alpha-1)+1]p_1/2}$, where $m\geq 1$ is an integer, $\gamma_1$ is a polynomial such that $\gamma_1^2=b_1$ and $\gamma$ is a meromorphic function with finitely many poles and of order less than~$k$, and $c_0$, $\cdots$, $c_m$ are constants such that $c_0^{2}=2c_0c_1=1$ when $m=1$, and $c_0^{2}=2c_0c_1=1$ and $\sum_{\substack{j_0+\cdots+j_m=2,\\j_1+\cdots+mj_m=j}}\frac{2}{j_0!j_1!\cdots j_m!}c_0^{j_0}c_1^{j_1}\cdots c_m^{j_m}=0$, $j=2,\cdots,m$, when $m\geq 2$. Note that $(2m-1)/(2m)<\alpha\leq [2(m+1)-1]/[2(m+1)]$ and $(m+j+1)(\alpha-1)+1=j(\alpha-1)+1/2$, $j=0,1,\cdots,m-1$, when $\alpha=[2(m+1)-1]/[2(m+1)]$. By substituting this expression together with $p_2=\alpha p_1$ into equation \eqref{linear eq1}, we get
\begin{equation}\label{1Eq3.29 prev}
\begin{split}
&\gamma_1^2\sum_{k_0=m+1}^{2m}C_{k_0}\left(\frac{b_2}{b_1}\right)^{k_0}e^{(k_0\alpha-k_0+1)p_1}+\gamma^2+\gamma'+2\frac{\kappa'}{\kappa}\gamma+\frac{\kappa''}{\kappa}-b_3\\
&+\gamma_1\sum_{j=0}^mc_j\left(\frac{b_2}{b_1}\right)^{j}\left[2\gamma+\frac{\gamma_1'}{\gamma_1}+2\frac{\kappa'}{\kappa}+j\frac{(b_2/b_1)'}{b_2/b_1}+\frac{2j(\alpha-1)+1}{2}p_1'\right]e^{\frac{2j(\alpha-1)+1}{2}p_1}=0,
\end{split}
\end{equation}
where $C_{k_0}=\sum_{\substack{j_0+\cdots+j_m=2,\\j_1+\cdots+mj_m=k_0}}\frac{2!}{j_0!j_1!\cdots j_m!}c_0^{j_0}c_1^{j_1}\cdots c_m^{j_m}$, $k_0=m+1,\cdots,2m$. We may rewrite the left-hand side of equation \eqref{1Eq3.29 prev} by combining the same exponential terms together. By Borel's lemma, all coefficients of the exponential terms in the resulting equation vanish identically. Therefore, we have $\gamma^2+\gamma'+2\gamma\kappa'/\kappa+\kappa''/\kappa=b_3$ and, by looking at the coefficients of the terms $e^{[2m(\alpha-1)+1]p_1/2}$ and $e^{p_1/2}$ in \eqref{1Eq3.29 prev}, respectively, that
\begin{equation}\label{ossica}
\begin{split}
2\gamma+\frac{\gamma_1'}{\gamma_1}+2\frac{\kappa'}{\kappa}+\left[m\frac{(b_2/b_1)'}{b_2/b_1}+\frac{2m(\alpha-1)+1}{2}p_1'\right]&=0,\\
\gamma_1\tilde{C}_{k_0}\left(\frac{b_2}{b_1}\right)^{k_0}+c_0\left(2\gamma+\frac{\gamma_1'}{\gamma_1}+2\frac{\kappa'}{\kappa}+\frac{1}{2}p_1'\right)&=0,
\end{split}
\end{equation}
where $\tilde{C}_{k_0}=C_{m+1}$ when $\alpha=[2(m+1)-1]/[2(m+1)]$ or $\tilde{C}_{k_0}=0$ when $\alpha<[2(m+1)-1]/[2(m+1)]$. Since $\gamma$ has only finitely many poles, we see from the first equation in \eqref{ossica} that $\kappa$ has only finitely many zeros. We may suppose that $\kappa$ is a polynomial. It follows that $\gamma$ is a rational function having simple poles only. From the equations in \eqref{ossica} we get
\begin{equation}\label{ossica12}
\begin{split}
-\tilde{C}_{k_0}\gamma_1(b_2/b_1)^{k_0}=mc_0[(b_2/b_1)'/(b_2/b_1)+(\alpha-1)p_1'],
\end{split}
\end{equation}
which is possible only when $\tilde{C}_{k_0}\not=0$ and thus $\alpha=[2(m+1)-1]/[2(m+1)]$.
Suppose that $z_0$ is a zero of $b_1$ of order~$l_1$. We see that $z_0$ must also be a zero of $b_2$ for otherwise $\gamma_1(b_2/b_1)^{m+1}$ would have a pole at $z_0$ of order $(m+1-1/2)l_1=(m+1/2)l_1>1$, which is impossible. Suppose that $z_0$ is a zero of $b_2$ of order~$l_2$. If $l_1>l_2$, then from equation \eqref{ossica12} we see that $(m+1)(l_1-l_2)-l_1/2\leq 1$, which is impossible since $f$ is entire and we also have $m(l_1-l_2)-l_1/2=1$. Therefore, $l_1\leq l_2$ and we conclude from equation \eqref{ossica12} that $b_2/b_1$ is a constant. Since $f$ is entire, this then yields that $\gamma$ is a polynomial. Then from the second equation of \eqref{ossica} we deduce that $\kappa$, $\gamma_1$ are both constants. Then from equation \eqref{ossica12} we see that $p_1'$ is a constant. Finally, from the first equation in \eqref{ossica} and the equation $\gamma^2+\gamma'+2\gamma\kappa'/\kappa+\kappa''/\kappa=b_3$ we see that $\gamma$ and $b_3$ are also both constants.

Now, since $p_1(0)=p_2(0)=0$, we have $p_1=z$ and $p_2=\alpha z$ and can assume that $f=\gamma+\sum_{j=0}^mc_je^{[2j(\alpha-1)+1]z/2}$, where $\gamma$ and $c_0$, $\cdots$, $c_m$ are constants such that $c_0^{n}=b_1$, $2c_0c_1=b_2$ when $m=1$, and $c_0^{2}=b_1$ and $2c_0c_1=b_2$ and $C_{k_0}=\sum_{\substack{j_0+\cdots+j_m=2,\\j_1+\cdots+mj_m=k_0}}\frac{2!}{j_0!j_1!\cdots j_m!}c_0^{j_0}c_1^{j_1}\cdots c_m^{j_m}=0$, $k_0=2,\cdots,m$, when $m\geq 2$. Suppose that $m\geq 2$. From previous discussions on \eqref{1Eq3.29 prev} we have
\begin{equation}\label{uniqued}
\begin{split}
C_{m+1}+2\gamma c_0+c_0/2&=0,\\
C_{m+2}+2\gamma c_1+c_1[2(\alpha-1)+1]/2&=0,\\
\cdots\\
C_{2m-1}+2\gamma c_{m-2}+c_{m-2}[2(m-2)(\alpha-1)+1]/2&=0,\\
C_{2m}+2\gamma c_{m-1}+c_{m-1}[2(m-1)(\alpha-1)+1]/2&=0,\\
2\gamma c_{m}+c_{m}[2m(\alpha-1)+1]/2&=0.
\end{split}
\end{equation}
From the last equation in \eqref{uniqued} we get $2\gamma=-[2m(\alpha-1)+1]/2$. By substituting this relation into the first~$m$ equations in \eqref{uniqued} we obtain
\begin{equation}\label{uniqued0}
\begin{split}
2c_{1}c_{m}+2c_2c_{m-1}+\cdots+mc_0&=0,\\
2c_{2}c_{m}+2c_3c_{m-1}+\cdots+(m-1)(\alpha-1)c_1&=0,\\
\cdots\\
2c_{m}c_{m-2}+c_{m-1}^2+3(\alpha-1)c_{m-3}&=0,\\
2c_{m-1}c_m+2(\alpha-1)c_{m-2}&=0,\\
c_m^2+(\alpha-1)c_{m-1}&=0.
\end{split}
\end{equation}
From the last two equations in \eqref{uniqued0} we obtain $c_{m}/c_{m-1}=c_{m-1}/c_{m-2}$. By using this relation we obtain from the last three equations in \eqref{uniqued0} that $c_{m-1}/c_{m-2}=c_{m-2}/c_{m-3}$. By induction we finally obtain that $c_{m}/c_{m-1}=c_{m-1}/c_{m-2}=\cdots=c_{1}/c_{0}$. Denote $t=c_{1}/c_{0}$. Then we have $c_{1}=tc_{0}$ and $c_{2}=t^2c_{0}$. But then from the equation $C_2=2c_0c_2+c_1^2=0$ we get $3t^2c_0^2=0$, a contradiction. Therefore, we must have $m=1$. Then we have the first and last equations in \eqref{uniqued} for $m=1$, i.e., $c_1^2+2\gamma c_0+c_0/2=0$ and $2\gamma c_1+c_1/4=0$, which together with the  equations $\gamma^2=b_3$, $c_0^2=b_1$ and $2c_0c_1=b_2$ give $\gamma=-1/8$ and $c_0=-4c_1^2$, $b_1=16c_1^2$, $b_2=-8c_1^3$ and $b_3=1/64$. We complete the proof.

\end{proof}

To prove Theorem~\ref{maintheorem1}, we first introduce the definition of \emph{$R$--set}: An $R$--set in the complex plane is a countable union of discs whose radii have finite sum. Then the set of angles $\theta$ for which the ray $z=re^{i\theta}$, $\theta\in [0,2\pi)$, meets infinitely many discs of a given $R$--set has linear measure zero; see \cite{Banklangley1987} or \cite[p.~84]{Laine1993}. Note that any finite union of $R$--sets is still an $R$--set. Let $f(z)$ be an entire solution of \eqref{EQ1}. We denote the union of all $R$--sets associated with $f(z)$ and each coefficient of $P(z,f)$ by $\hat{R}$ from now on. In the proof of Theorem~\ref{maintheorem1}, after taking the derivatives on both sides of equation \eqref{EQ1}, there may be some new coefficients appearing in the resulting equation. We will always assume that $\hat{R}$ also contains those $R$-sets associated with these new coefficients. Recall the following pointwise estimate due to Gundersen~\cite{gundersen:88}.

\begin{lemma}[\cite{gundersen:88}]\label{derivativelemma}
Let $f$ be a transcendental meromorphic function of finite order $\rho(f)$, and let $\varepsilon>0$ be a given constant. Then there exists a set $E\in [0,2\pi)$ that has linear measure zero, such that if $\psi_0\in[0,2\pi)-E$, then there exists a constant $r_0=r_0(\psi_0)>1$ such that for all $z$ satisfying $\arg z=\psi_0$ and $|z|\geq r_0$, for all positive integers $k>j\geq 0$, we have
\begin{equation*}
\left|\frac{f^{(k)}(z)}{f^{(j)}(z)}\right|\leq |z|^{(k-j)(\rho(f)-1+\varepsilon)}.
\end{equation*}
\end{lemma}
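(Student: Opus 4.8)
The plan is to derive the estimate from the Poisson--Jensen formula, proving it first for the special ratio $f^{(m)}/f$ and reducing the general case to this at the end. Fix $\varepsilon>0$ and set $R=2r$ on the ray $z=re^{i\psi_0}$. Applying the Poisson--Jensen formula on the disc $|z|\le R$ to $\log|f|$ and then differentiating represents the logarithmic derivative $f'(z)/f(z)$ as the sum of a boundary integral $\frac{1}{2\pi}\int_0^{2\pi}\frac{2Re^{i\phi}}{(Re^{i\phi}-z)^2}\log|f(Re^{i\phi})|\,d\phi$, the zero and pole sums $\sum_{|a_\mu|<R}\frac{1}{z-a_\mu}$ and $-\sum_{|b_\nu|<R}\frac{1}{z-b_\nu}$, and two reflection sums $\sum_{|a_\mu|<R}\frac{\overline{a_\mu}}{R^2-\overline{a_\mu}z}$ and $-\sum_{|b_\nu|<R}\frac{\overline{b_\nu}}{R^2-\overline{b_\nu}z}$, where $a_\mu$ and $b_\nu$ run over the zeros and poles of $f$ in $|z|<R$. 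Multiplying the consecutive logarithmic derivatives $f^{(i+1)}/f^{(i)}$ yields an analogous representation for $f^{(m)}/f$.

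The elementary terms are handled directly. Since $f$ has order $\rho(f)$, we have $T(2r,f)\le r^{\rho(f)+\varepsilon/2}$ for all large $r$; the kernel $\frac{2Re^{i\phi}}{(Re^{i\phi}-z)^2}$ has modulus at most $2R/(R-r)^2=4/r$, while $\frac{1}{2\pi}\int_0^{2\pi}|\log|f(Re^{i\phi})||\,d\phi\le 2T(2r,f)+O(1)$, so the boundary integral is $O(r^{\rho(f)-1+\varepsilon})$. Because $|a_\mu|,|b_\nu|<R=2r$ while $|R^2-\overline{a_\mu}z|\ge R(R-r)=2r^2$, each reflection term is $O(1/r)$; as the number of zeros and poles in $|z|<2r$ satisfies $n(2r,f)+n(2r,1/f)=O(r^{\rho(f)+\varepsilon/2})$, the two reflection sums are also $O(r^{\rho(f)-1+\varepsilon})$.

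The only genuinely delicate terms are the zero and pole sums $\sum 1/(z-a_\mu)$ and $\sum 1/(z-b_\nu)$, and this is the main obstacle. Their average over the circle $|z|=r$ is already of the right size: by Fubini, the elementary bound $\int_{|z|=r}|z-c|^{-1}\,|dz|=O(\log r)$, and the counting estimate above, one gets $\frac{1}{2\pi r}\int_{|z|=r}\sum 1/|z-c|\,|dz|=O(r^{\rho(f)-1+\varepsilon})$. Along an individual ray, however, the sum can be far larger where $z$ meets a cluster of zeros or poles, so a pointwise bound cannot hold everywhere. To promote the averaged bound to a pointwise one I would show that the planar set on which $|f^{(m)}(z)/f(z)|$ exceeds $|z|^{m(\rho(f)-1+\varepsilon)}$ can be covered by an $R$--set: a Chebyshev-type estimate on the averaged bound, applied across the dyadic scales $r=2^l$, produces a countable family of discs whose radii have finite sum. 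By the property of $R$--sets recalled just before the lemma, the set $E\subset[0,2\pi)$ of angles whose ray meets infinitely many of these discs has linear measure zero, and for $\psi_0\notin E$ the ray $\arg z=\psi_0$ eventually avoids the bad set, giving $|f^{(m)}(z)/f(z)|\le |z|^{m(\rho(f)-1+\varepsilon)}$ for $|z|\ge r_0(\psi_0)$.

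Finally, for the general ratio $f^{(k)}/f^{(j)}$ with $k>j\ge 0$, I would apply the case just established to the meromorphic function $g:=f^{(j)}$, noting that differentiation preserves the order, so $\rho(g)=\rho(f)$. Since $g^{(k-j)}/g=f^{(k)}/f^{(j)}$, this yields $|f^{(k)}(z)/f^{(j)}(z)|\le |z|^{(k-j)(\rho(f)-1+\varepsilon)}$ outside an exceptional angular set $E_j$ of linear measure zero. Taking $E$ to be the union of the $E_j$ over all $j\ge 0$ --- a countable union, hence still of linear measure zero --- produces a single exceptional set valid simultaneously for every pair $k>j\ge 0$, as claimed.
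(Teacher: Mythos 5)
First, a point of reference: the paper offers no proof of this lemma at all; it is quoted verbatim from Gundersen \cite{gundersen:88}, so there is no internal argument to compare yours against, and I measure your proposal against Gundersen's actual proof. Your opening moves are right and match his: the differentiated Poisson--Jensen representation, the $O(r^{\rho(f)-1+\varepsilon})$ bounds for the boundary integral and the two reflection sums, and the reduction of $f^{(k)}/f^{(j)}$ to the case $f^{(m)}/f$ via $g=f^{(j)}$ (using $\rho(f^{(j)})=\rho(f)$) are all sound.

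The gap is in the step you yourself flag as the delicate one, and it is a gap of principle, not of detail. Two problems. (i) The averaged bound is not right as stated: $\int_{|z|=r}|z-c|^{-1}\,|dz|$ is $O\bigl(1+\log (r/|\,r-|c|\,|)\bigr)$, not $O(\log r)$, and this blows up when zeros or poles lie very close to the circle $|z|=r$; this is exactly why Gundersen's circular version of the estimate carries an exceptional set of radii of finite logarithmic measure. (ii) More seriously, the bad set cannot in general be covered by an $R$--set, so the covering strategy fails. Already the single term $|z-c_\mu|^{-1}$ exceeds $|z|^{\rho-1+\varepsilon}$ on the disc $D\bigl(c_\mu,\,|c_\mu|^{-(\rho-1+\varepsilon)}\bigr)$, and $\sum_\mu |c_\mu|^{-(\rho-1+\varepsilon)}$ diverges whenever $\varepsilon\le 1$ and the zeros and poles have counting function of genuine order $\rho$; so the superlevel set contains a family of discs whose radii have infinite sum and is in general not contained in any $R$--set (take the $c_\mu$ along a single ray to see that no disc cover of summable radii exists). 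What makes the lemma true is that the \emph{angular} measures of these discs, of size roughly $|c_\mu|^{-(\rho+\varepsilon)}$, are summable; Gundersen accordingly estimates the measure of the set of angles $\psi$ for which $\sum_{|c_\mu|\le 2r}|re^{i\psi}-c_\mu|^{-1}$ is large at dyadic scales and applies a Borel--Cantelli argument to the angles directly, never passing through an $R$--set. You would need to replace your covering step by this angular argument. A smaller issue: taking the union of the exceptional sets $E_j$ does give a measure-zero set of angles, but each pair $(k,j)$ then comes with its own threshold $r_0^{(k,j)}$, whereas the statement asserts a single $r_0(\psi_0)$ valid for all $k>j\ge 0$ simultaneously, so some uniformity still has to be argued.
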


Denote the maximal one of the orders of the coefficients of $P(z,f)$ by $\rho_0$. From now on we let $\rho$ be a fixed constant satisfying $\max\{\rho_0,k-1\}<\rho<k$. Since $\alpha\not=1$, then by Steinmetz's result~\cite{Steinmetz1978} for exponential polynomials, we have $T(r,b_1e^{p_1}+b_2e^{p_2})=C(1+o(1))r^k$, $r\to\infty$, for some nonzero constant $C$ depending only on $\alpha$ and that $m(r,1/(b_1e^{p_1}+b_2e^{p_2}+\gamma))=o(1)r^k$, $r\to\infty$, for any nonzero entire function $\gamma$ of order less than~$k$. Then by slightly modifying the proof of \cite[Lemma~2.4]{zhang2021}, we easily obtain the following

\begin{lemma}\label{growthlemmapre}
Under the assumptions of Theorem~\ref{maintheorem1}, we have $\rho(f)=k$ and, after a possible linear transformation $f\rightarrow f+c$ with a suitable constant $c$, that $m(r,1/f)=O(r^{\rho})$, $m(r,e^{p_1}/f^n)=O(r^{\rho})$ and $m(r,e^{p_2}/f^n)=O(r^{\rho})$. Moreover, if $\alpha<0$, then we have $m(r,e^{p_1+p_2}/f^{2n-1})=O(r^{\rho})$ and $m(r,e^{p_1+p_2}/f^{2n-2})=O(r^{\rho})$; if $0<\alpha<1$, letting $m$ be the smallest integer such that $\alpha\leq [(m+1)n-1]/[(m+1)n]$, then $m(r,e^{(m+1)p_2}/f^{(m+1)n-1})=O(r^{\rho})$.
\end{lemma}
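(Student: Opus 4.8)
The plan is to reduce every assertion to pointwise control of $|f|$ on rays $z=re^{i\theta}$ and then to integrate $\log^{+}$ of the relevant quotient over the circle $|z|=r$, absorbing the coefficient contributions through $\int_0^{2\pi}\log^{+}|a(re^{i\theta})|\,d\theta=2\pi m(r,a)=O(r^{\rho_0})$ for each coefficient $a$. Before that I would settle $\rho(f)=k$: since $f$ is entire, $T(r,f^{(l)})\le T(r,f)+S(r,f)$, so each monomial of $P$ contributes at most $(n-1)T(r,f)+O(r^{\rho_0})$ to the characteristic; writing $f^{n}=g-P(z,f)$ with $g:=b_1e^{p_1}+b_2e^{p_2}$ and invoking Steinmetz's estimate $T(r,g)=C(1+o(1))r^{k}$ gives $nT(r,f)\le C(1+o(1))r^{k}+(n-1)T(r,f)+O(r^{\rho_0})$, whence $\rho(f)\le k$, while $T(r,g)\le(2n-1)T(r,f)+O(r^{\rho_0})$ forces $\rho(f)\ge k$.

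The heart of the matter is a pointwise comparison, valid on every ray avoiding the exceptional set of Lemma~\ref{derivativelemma} and the directions meeting infinitely many discs of $\hat R$. On such a ray Gundersen's bound $|f^{(l)}(z)|\le|z|^{l(k-1+\varepsilon)}|f(z)|$, together with $|a(z)|\le e^{O(r^{\rho})}$ for each coefficient $a$ off $\hat R$, gives $|P(z,f)|\le e^{O(r^{\rho})}|f|^{\,n-1}$. On each sector where $\max\{\delta(p_1,\theta),\delta(p_2,\theta)\}>0$ the growth facts recalled before Theorem~\ref{maintheorem0} make $|g|$ exponentially large, so $|P(z,f)|=o(|g|)$ and therefore $|f|^{n}\asymp|g|$, in fact $n\log|f|=\log|g|+O(\log r)$. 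Two regions need separate treatment: the shrinking neighborhoods $|\theta-\theta_j|\lesssim r^{-(k-\rho)}$ of the finitely many critical directions $\theta_j$ where $\delta(p_1,\theta)$, $\delta(p_2,\theta)$ or $\delta(p_1,\theta)-\delta(p_2,\theta)$ vanishes, on which the relevant real parts are $O(r^{\rho})$; and any sector on which $\delta(p_1,\theta)<0$ and $\delta(p_2,\theta)<0$ at once, so that $g\to0$. On the latter $f$ cannot be large, and this is exactly where the transformation $f\to f+c$ enters: for a suitable $c$ the degree-zero term of the differential polynomial of the transformed equation (a polynomial in $c$ of degree $n$) is nonzero, whence the balance $f^{n}\approx-(\text{this term})$ forces $|f|\gtrsim1$ there and $\log^{+}|1/f|=O(1)$; since no such sector occurs when $\alpha<0$, no transformation is needed in that case.

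Granting this, each estimate reduces to a sign inequality for the leading coefficient in $r^{k}$. For $m(r,e^{p_1}/f^{n})$ one has $\log^{+}|e^{p_1}/f^{n}|=\mathrm{Re}\,p_1-\log|g|+O(\log r)$, which is $O(\log r)$ on the $e^{p_1}$-sector and has leading coefficient $\delta(p_1,\theta)-\delta(p_2,\theta)\le0$ on the $e^{p_2}$-sector; the case of $e^{p_2}/f^{n}$ is symmetric, and $m(r,1/f)=O(r^{\rho})$ because $\log^{+}|1/f|$ is $O(\log r)$ on every growing sector, $O(1)$ on the decaying sector after translation, and controlled on the transition arcs. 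For the remaining estimates $\alpha$ is real, so $\delta(p_1,\theta)=\cos k\theta$ and $\delta(p_2,\theta)=\alpha\cos k\theta$ to leading order: on the $e^{p_1}$-sector the exponent of $e^{(m+1)p_2}/f^{(m+1)n-1}$ has leading coefficient $[(m+1)(\alpha-1)+1/n]\cos k\theta$, nonpositive precisely under the inequality $\alpha\le[(m+1)n-1]/[(m+1)n]$ that defines $m$, and on the decaying sector it is negative since $e^{p_2}\to0$ while $|f|\asymp1$; likewise, when $\alpha<0$, the exponents of $e^{p_1+p_2}/f^{2n-1}$ and $e^{p_1+p_2}/f^{2n-2}$ have leading coefficients $(\alpha-\tfrac{n-1}{n})\cos k\theta$ and $(\alpha-\tfrac{n-2}{n})\cos k\theta$ on the $e^{p_1}$-sector (and the mirror expressions on the $e^{p_2}$-sector), all nonpositive because $\alpha<0$.

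The step I expect to be the main obstacle is making the comparison uniform enough to survive integration, namely controlling the transition arcs. There $|g|$ is only polynomially bounded and $f$ may genuinely approach $0$, so one must separate the true zeros of $f$ (or of $f+c$), which feed $N(r,\cdot)$ rather than $m(r,\cdot)$, from arcs of near-vanishing, and verify that the width $r^{-(k-\rho)}$ --- admissible precisely because $\rho>k-1$ --- keeps their contribution $\lesssim r^{-(k-\rho)}\cdot r^{k}=r^{\rho}$. Checking that a single $c$ works on all decaying sectors simultaneously, and that none of the finitely many boundary directions carries hidden mass, is the point at which the argument must follow carefully that of \cite[Lemma~2.4]{zhang2021}.
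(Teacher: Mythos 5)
The paper itself gives no proof of this lemma: it is obtained ``by slightly modifying the proof of \cite[Lemma~2.4]{zhang2021}'', with Steinmetz's theorem supplying $T(r,b_1e^{p_1}+b_2e^{p_2})=C(1+o(1))r^k$ and $m(r,1/(b_1e^{p_1}+b_2e^{p_2}+\gamma))=o(1)r^k$. That argument works entirely at the level of Nevanlinna functionals: one manipulates \eqref{EQ1} and its derivative, applies Clunie's lemma and the lemma on the logarithmic derivative (whose error terms are $O(r^{\rho})$ because all coefficients have order $<\rho$ and $\rho(f)=k<\infty$), and uses the second main theorem to justify the shift $f\to f+c$. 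Your first paragraph ($\rho(f)=k$) is fine and matches this. But your main strategy --- ray-wise asymptotics $|f|^n\asymp|b_1e^{p_1}+b_2e^{p_2}|$ followed by integration of $\log^{+}$ over circles --- is a genuinely different route, and it has a gap precisely where you flag it, which in my view is not a technicality but the reason the referenced proof avoids this route altogether.

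Concretely: the pointwise comparison you establish holds only for $\theta$ outside a set of linear measure zero (Gundersen's exceptional set plus the directions meeting infinitely many discs of $\hat R$) and only for $r\ge r_0(\theta)$ with $r_0$ depending on $\theta$; at a fixed large $r$ you have no uniform control of the set of ``bad'' $\theta$, so the circle integral defining $m(r,\cdot)$ cannot simply be split as you propose. On the transition arcs of width $r^{-(k-\rho)}$ your bound ``$\lesssim r^{-(k-\rho)}\cdot r^{k}$'' presupposes $\log^{+}|1/f|=O(r^{k})$ pointwise there, which fails near zeros of $f$ (of which there are $O(r^{k})$ in $|z|\le r$); repairing this needs a minimum-modulus estimate of Cartan type plus a count of the excluded discs, none of which is supplied. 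Likewise, on the sectors where both $\delta(p_1,\theta)<0$ and $\delta(p_2,\theta)<0$ (which do occur for the unproved-yet non-real $\alpha$, the very case Lemma~\ref{growthlemmapre} must cover so that Lemma~\ref{reallemma} can rule it out), your lower bound $|f+c|\gtrsim 1$ rests on the degree-zero term of the shifted equation being ``nonzero''; but a not-identically-zero meromorphic function of order $<k$ admits only an \emph{integrated} lower bound $m(r,1/(a_0+(-c)^n))=O(r^{\rho})$, not a pointwise one along every ray of the sector, so the pointwise balance you invoke is not available. Your sign computations for the exponents (e.g.\ $(m+1)\alpha-(m+1)+1/n\le 0$ exactly when $\alpha\le[(m+1)n-1]/[(m+1)n]$, and $\alpha-\frac{n-1}{n}<0$, $\alpha-\frac{n-2}{n}\le 0$ for $\alpha<0$) are correct and show you have identified the right inequalities, but as written the argument does not close; the functional-analytic route through Clunie's lemma and Steinmetz's estimate is what actually delivers the $O(r^{\rho})$ bounds.
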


We write $\alpha=a+ib$, where $a$ and $b$ are both real. By slightly modifying the proof
in \cite[Theorem~3.2]{Ishizalikazuya19971}, we can prove the following Lemmas~\ref{growthlemma} and~\ref{reallemma}, which show that $\alpha$ must be real under the assumptions of Theorem~\ref{maintheorem1}.

\begin{lemma}\label{growthlemma}
Under the assumptions of Theorem~\ref{maintheorem1}, let $\theta\in[0,2\pi)$ be such that the ray $z=re^{i\theta}$ meets only finitely many discs in $\hat{R}$. If $\delta(p_1,\theta)>0$ and $\delta(p_1,\theta)>\delta(p_2,\theta)$, then $f(re^{i\theta})^n= b_1(re^{i\theta})(1+o(1))e^{p_1(re^{i\theta})}$, $r\to\infty$; if $\delta(p_2,\theta)>0$ and $\delta(p_2,\theta)>\delta(p_1,\theta)$, then $f(re^{i\theta})^n= b_2(re^{i\theta})(1+o(1))e^{p_2(re^{i\theta})}$, $r\to\infty$.
\end{lemma}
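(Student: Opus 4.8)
The plan is to work pointwise along the fixed ray $z=re^{i\theta}$ and to show that the term $b_1e^{p_1}$ dominates the right-hand side of \eqref{EQ1} while the differential polynomial $P(z,f)$ is negligible compared with $f^n$. Assume $\delta(p_1,\theta)>0$ and $\delta(p_1,\theta)>\delta(p_2,\theta)$, the other case being symmetric under interchange of $p_1$ and $p_2$. Writing $\mathrm{Re}\,p_j(re^{i\theta})=\delta(p_j,\theta)r^k+O(r^{k-1})$ and noting $|b_2/b_1|=O(r^{C})$ for some constant $C$, one gets $|b_2e^{p_2}/(b_1e^{p_1})|\to 0$ as $r\to\infty$ on this ray, since $\delta(p_2,\theta)-\delta(p_1,\theta)<0$. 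Combined with the growth estimate~(1) recalled before Theorem~\ref{maintheorem0}, this yields $|b_1e^{p_1}+b_2e^{p_2}|\geq \tfrac12|b_1|\,e^{(1-\varepsilon)\delta(p_1,\theta)r^k}$ for all large $r$. By Lemma~\ref{growthlemmapre}, $f$ is transcendental of order $k$, so it remains to prove $f^n=b_1e^{p_1}(1+o(1))$, equivalently that $P(z,f)=o(f^n)$ along the ray.

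The key ingredient is a pointwise bound for $P(z,f)$. Since the ray meets only finitely many discs of $\hat{R}$ and, enlarging the exceptional set of directions by the measure-zero set of Lemma~\ref{derivativelemma} if necessary, $\theta$ avoids Gundersen's exceptional directions, every factor $f^{(l)}$ occurring in a monomial satisfies $|f^{(l)}(z)/f(z)|\leq |z|^{l(k-1+\varepsilon)}$ once $r$ is large. Because each coefficient $a_j$ is meromorphic of order at most $\rho_0<\rho<k$ with its poles enclosed in $\hat{R}$, we also have $|a_j(z)|\leq e^{r^{\rho}}$ for large $r$ on the ray. Hence each monomial of $P(z,f)$, of degree $d_j\leq n-1$, is bounded by $|a_j|\,|f|^{d_j}\prod_l|f^{(l)}/f|^{n_{jl}}\leq e^{r^{\rho}}r^{C'}\max\{|f|^{n-1},1\}$ for suitable constants $C'$, so that $|P(z,f)|\leq C_1e^{r^{\rho}}r^{C'}\max\{|f|^{n-1},1\}$ on the ray.

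Next I would bootstrap a lower bound for $|f|$. If $|f(re^{i\theta})|\leq e^{r^{\rho}}$ held for some arbitrarily large $r$, then the previous estimate gives $|P(z,f)|\leq C_1r^{C'}e^{nr^{\rho}}$ and hence $|f^n+P(z,f)|\leq C_2r^{C'}e^{nr^{\rho}}$, which is $\ll e^{(1-\varepsilon)\delta(p_1,\theta)r^k}$ because $\rho<k$; this contradicts the lower bound for $|b_1e^{p_1}+b_2e^{p_2}|$ obtained above. Therefore $|f|>e^{r^{\rho}}$ for all large $r$ on the ray, and feeding this back into $|f|^{n-1}\bigl(|f|+C_1r^{C'}e^{r^{\rho}}\bigr)\geq |f^n+P(z,f)|=|b_1e^{p_1}+b_2e^{p_2}|\geq\tfrac12|b_1|e^{(1-\varepsilon)\delta(p_1,\theta)r^k}$ yields the sharper bound $|f|\geq c\,e^{(1-\varepsilon)\delta(p_1,\theta)r^k/n}$ for some $c>0$.

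Finally, with this lower bound the pointwise estimate for $P(z,f)$ gives $|P(z,f)/f^n|\leq C_1e^{r^{\rho}}r^{C'}/|f|\to 0$, since $\rho<k$. Dividing \eqref{EQ1} by $b_1e^{p_1}$ and setting $w=f^n/(b_1e^{p_1})$, we obtain $w\bigl(1+P(z,f)/f^n\bigr)=1+b_2e^{p_2}/(b_1e^{p_1})=1+o(1)$, whence $w=1+o(1)$, i.e. $f(re^{i\theta})^n=b_1(re^{i\theta})(1+o(1))e^{p_1(re^{i\theta})}$ as $r\to\infty$. The main obstacle is the circularity between controlling $P(z,f)$, which needs a lower bound for $|f|$, and establishing that lower bound, which needs $P(z,f)$ to be negligible; this is resolved by the crude bootstrap step, whose only inputs are Gundersen's estimate of Lemma~\ref{derivativelemma}, the subexponential growth $e^{r^{\rho}}$ of the coefficients with $\rho<k$, and the domination of the right-hand side by $b_1e^{p_1}$.
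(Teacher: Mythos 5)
Your proposal is correct and follows essentially the same route as the paper: bound $P(z,f)$ pointwise via Gundersen's estimate and the subexponential coefficient bounds, rule out $|f|$ being too small on the ray by comparing with the dominant term $b_1e^{p_1}$ on the right-hand side, and then conclude $P(z,f)/f^n\to 0$ from the resulting lower bound $|f|\geq e^{\eta r^k}$. Your two-step bootstrap (first $|f|>e^{r^{\rho}}$, then the sharper exponential lower bound) is just a slightly more explicit organization of the paper's single contradiction argument.
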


\begin{proof}
Let $\varepsilon>0$ be given and small. By Lemmas~\ref{derivativelemma} and~\ref{growthlemmapre}, there exists a constant $r_0=r_0(\theta)>1$ such that for all $z$ satisfying $|z|=r\geq r_0$, the ray $z=re^{i\theta}$ does not meet $\hat{R}$, and for all positive integers $j$,
\begin{equation}\label{lemmagrw0}
\left|\frac{f^{(j)}(re^{i\theta})}{f(re^{i\theta})}\right|\leq r^{j(k-1+\varepsilon)}.
\end{equation}
For each coefficient of $P(z,f)$, say $a_l$, we have $\rho(a_l)+\varepsilon\leq \rho<k$ and thus
\begin{equation}\label{lemmagrw1}
|a_l(re^{i\theta})|\leq e^{r^{\rho}}
\end{equation}
for all sufficiently large $r$. Consider first the case that $\delta(p_1,\theta)>0$ and $\delta(p_1,\theta)>\delta(p_2,\theta)$. Then along the ray $z=re^{i\theta}$ we have $b_1(re^{i\theta})e^{p_1(re^{i\theta})}+b_2(re^{i\theta})e^{p_2(re^{i\theta})}=b_1(re^{i\theta})(1+o(1))e^{p_1(re^{i\theta})}$, $r\to\infty$.

Recalling that $P(z,f)=\sum^{m}_{l=1}a_{l}f^{n_{l0}}(f')^{n_{l1}}\cdots(f^{(s)})^{n_{ls}}$, where $m$ is an integer and $n_{l0}+n_{l1}+\cdots n_{ls}\leq n-1$, we may write
\begin{equation}\label{lemmaQ10}
P(z,f)=\sum^{m}_{l=1}\hat{a}_{l}f^{n_{l0}+n_{l1}+\cdots+n_{ls}}
\end{equation}
with the new coefficients $\hat{a}_l=a_l(f'/f)^{n_{l1}}\cdots(f^{(s)}/f)^{n_{ls}}$. Denote by $L$ the greatest order of the derivatives of $f$ in $P(z,f)$. Suppose that there is an infinite sequence $z_j=r_je^{i\theta}$ such that $|f(r_je^{i\theta})|\leq e^{o(1)r_j^k}$. Then, from \eqref{EQ1}, \eqref{lemmagrw0}, \eqref{lemmagrw1} and \eqref{lemmaQ10} we have
\begin{equation}\label{lemmaQ3}
|b_1(r_je^{i\theta})(1+o(1))e^{p_1(r_je^{i\theta})}|\leq |f(r_je^{i\theta})^n+P(r_je^{i\theta},f)|\leq e^{o(r_j^k)}+mr_j^{L(k-1+\varepsilon)}e^{r_j^{\rho}}e^{o(r_j^k)},
\end{equation}
which is impossible when $r_j$ is large. Therefore, along the ray $z=re^{i\theta}$, there must be some $\eta>0$ such that $|f(re^{i\theta})|\geq e^{\eta r^{k}}$ for all sufficiently large $r$. We write \eqref{EQ1} as
\begin{equation}\label{lemmaQ4}
f(re^{i\theta})^n\left[1+\frac{P(re^{i\theta},f)}{f(re^{i\theta})^n}\right]=b_1(re^{i\theta})e^{p_1(re^{i\theta})}+b_2(re^{i\theta})e^{p_2(re^{i\theta})}.
\end{equation}
Then by combining \eqref{EQ1}, \eqref{lemmagrw0}, \eqref{lemmagrw1} and \eqref{lemmaQ10}, we easily see that $P(re^{i\theta},f)/f(re^{i\theta})^n\to 0$ as $r\to\infty$. Thus our first assertion follows. The second assertion can be proved in the same way.

\end{proof}

\begin{lemma}\label{reallemma}
Under the assumptions of Theorem~\ref{maintheorem1}, we have that $\alpha$ is real.
\end{lemma}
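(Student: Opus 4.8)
The plan is to assume $\alpha=a+ib$ with $b\neq0$ and derive a contradiction with $f$ being entire, adapting the phase-counting argument of \cite[Theorem~3.2]{Ishizalikazuya19971}. First I would record that, by Lemma~\ref{derivativelemma} and the $\hat R$-set convention, all but a linear-measure-zero set of rays $z=re^{i\theta}$ meet $\hat R$ only finitely often and carry the derivative bounds \eqref{lemmagrw0}. On such rays the three functions $\delta(p_1,\theta)=\cos k\theta$, $\delta(p_2,\theta)=a\cos k\theta-b\sin k\theta$ and their difference partition $[0,2\pi)$ into finitely many open arcs on which exactly one of the following holds: (a) $\delta(p_1,\theta)>\max\{0,\delta(p_2,\theta)\}$; (b) $\delta(p_2,\theta)>\max\{0,\delta(p_1,\theta)\}$; (c) $\max\{\delta(p_1,\theta),\delta(p_2,\theta)\}<0$. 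On arcs of type (a) and (b), Lemma~\ref{growthlemma} gives $f^n=b_j(1+o(1))e^{p_j}$, so along each such ray $\log|f(re^{i\theta})|=\tfrac1n\delta(p_j,\theta)r^k(1+o(1))$ and, after fixing a branch of the $n$-th root, $\arg f(re^{i\theta})=\tfrac1n\,\mathrm{Im}\,p_j(re^{i\theta})+O(1)$; the leading phase term is $\tfrac1n r^k\sin k\theta$ for $j=1$ and $\tfrac1n r^k(a\sin k\theta+b\cos k\theta)$ for $j=2$. On arcs of type (c) both exponentials decay, so the right-hand side of \eqref{EQ1} tends to $0$; combining \eqref{EQ1}, \eqref{lemmagrw0} and \eqref{lemmagrw1} as in the proof of Lemma~\ref{growthlemma} yields $\log|f(re^{i\theta})|=o(r^k)$, and the bound $m(r,1/f)=O(r^\rho)$ from Lemma~\ref{growthlemmapre} prevents $f$ from decaying like $e^{-cr^k}$ on a set of positive measure, so these recessive arcs are harmless for the growth of $f$.

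The engine is the argument principle applied to $f$ on circles $|z|=r$ avoiding $\hat R$ and the zeros of $f$: $\tfrac{1}{2\pi}\int_0^{2\pi}\tfrac{d}{d\theta}\arg f(re^{i\theta})\,d\theta=n(r,1/f)\in\N\cup\{0\}$. On arcs of type (a) and (b), differentiating the phase asymptotics above (equivalently, writing $\tfrac{d}{d\theta}\arg f=\mathrm{Im}\,[ire^{i\theta}f'(re^{i\theta})/f(re^{i\theta})]$ and using $f'/f=\tfrac1n p_j'(1+o(1))$) gives the clean estimate $\tfrac{d}{d\theta}\arg f(re^{i\theta})=\tfrac{k}{n}r^k\,\delta(p_j,\theta)(1+o(1))$. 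Thus the reality of $\alpha$ has to be read off from how these interior phase contributions glue across the critical rays, in particular across the crossing rays where $\delta(p_1,\theta)=\delta(p_2,\theta)>0$ and along which the zeros of $b_1e^{p_1}+b_2e^{p_2}$, hence essentially those of $f$, accumulate.

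The reality is then extracted by matching the phase across a transition. Near a crossing ray $\psi$ with $\delta(p_1,\psi)=\delta(p_2,\psi)>0$ I would write the dominant part of the right-hand side as $b_1e^{p_1}(1+w)$ with $w=(b_2/b_1)e^{p_2-p_1}$, so that $|w|$ passes through $1$ while $\arg w=\mathrm{Im}(p_2-p_1)+O(\log r)$ advances at rate $\tfrac{d}{d\theta}\arg w=r^k\tfrac{d}{d\theta}(\tilde\delta_2-\tilde\delta_1)(1+o(1))$, where $\tilde\delta_1=\sin k\theta$ and $\tilde\delta_2=a\sin k\theta+b\cos k\theta$ are the conjugate indicators. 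Compatibility of the local winding of $f$ produced by $1+w$ with the one-sided phase velocities $\tfrac{k}{n}r^k\delta(p_j,\cdot)$ coming from the two adjacent interiors should force, at every crossing ray, a relation between $\tilde\delta_2-\tilde\delta_1$ and $\delta(p_2,\cdot)-\delta(p_1,\cdot)$; since the crossing rays (zeros of $(1-a)\cos k\theta+b\sin k\theta$) are rotated off the zeros of $\delta(p_1,\cdot)$ and $\delta(p_2,\cdot)$ exactly when $\alpha$ is non-real, this relation cannot hold simultaneously at all crossing rays unless the out-of-phase component $b\cos k\theta$ of $\tilde\delta_2$ is absent, i.e.\ unless $b=0$.

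The step I expect to be the main obstacle is precisely this last matching. The difficulty is that all the crude leading-order data — the modulus asymptotics, the indicator $h_f(\theta)=\tfrac1n\max\{\delta(p_1,\theta),\delta(p_2,\theta),0\}$, and even the leading $r^k$-term of $n(r,1/f)$ — depend on $\alpha$ only through the real-part functions $\delta(p_j,\cdot)$ and are therefore insensitive to $\mathrm{Im}\,\alpha$; indeed $\tfrac1n\max\{\delta(p_1,\cdot),\delta(p_2,\cdot),0\}$ is $k$-trigonometrically convex for every $\alpha$, so no contradiction can come from growth alone. One must instead control the phase of $f$ in the thin transition zones (of angular width $\sim r^{-k}$) about the crossing rays and in the recessive arcs, and show that the accumulated winding is incompatible with single-valuedness of the entire function $f$ when $b\neq0$. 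This is the technical core inherited from \cite[Theorem~3.2]{Ishizalikazuya19971}, and carrying it out with polynomial (rather than linear) exponents $p_1,p_2$ and with meromorphic coefficients of order $<k$ in $P(z,f)$ is where the real work lies.
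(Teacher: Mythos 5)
Your approach is genuinely different from the paper's, and it contains a gap that you yourself flag: the decisive step is never carried out. The paper does not use the argument principle at all. It forms the auxiliary functions $F_1=f'-(B_1/n)f$ and $F_2=f'-(B_2/n)f$ with $B_j=b_j'/b_j+p_j'$, which by the elimination identities \eqref{1Eq2.2} and \eqref{1Eq3.1} are expressed through $P/f^{n-1}$, $P'/f^{n-1}$ and a single exponential divided by $f^{n-1}$; it then bounds $F_1$ and $F_2$ ray by ray using Lemma~\ref{growthlemma}, observes that when $\mathrm{Im}\,\alpha\neq0$ the sectors of possible fast growth of $F_1,F_2$ are flanked by rays of slow growth with small enough openings, applies Phragm\'{e}n--Lindel\"{o}f as in \cite{Ishizalikazuya19971} to conclude $\rho(F_1),\rho(F_2)<k$, and reaches a contradiction with $\rho(f)=k$ (Lemma~\ref{growthlemmapre}) via $f=\tfrac{n}{B_2-B_1}(F_1-F_2)$. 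Non-reality enters through the relative rotation of the sectors, converted into a growth statement about $F_1,F_2$ --- not about $f$ itself, whose indicator, as you correctly note, is insensitive to $\mathrm{Im}\,\alpha$.

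Your phase-counting plan stalls exactly where a contradiction must be produced. You concede that the leading $r^k$-term of the total winding $\int_0^{2\pi}\tfrac{d}{d\theta}\arg f\,d\theta=2\pi n(r,1/f)$ depends on $\alpha$ only through $\delta(p_1,\cdot)$ and $\delta(p_2,\cdot)$, so integrality and non-negativity of $n(r,1/f)$ give nothing; the contradiction must then come from the transition zones and recessive arcs, and there the proposal offers only the heuristic that a relation between $\tilde\delta_2-\tilde\delta_1$ and $\delta(p_2,\cdot)-\delta(p_1,\cdot)$ ``should'' be forced. No such relation is derived, and several obstacles stand in the way: (i) Lemma~\ref{growthlemma} and the estimate \eqref{lemmagrw0} hold only on rays avoiding a linear-measure-zero set of directions and with error terms not uniform in $\theta$, while the argument principle integrates over every $\theta$; (ii) on the recessive arcs the bound $\log|f|=o(r^k)$ says nothing about $\tfrac{d}{d\theta}\arg f=\mathrm{Im}(izf'/f)$, which Gundersen's estimate only caps at $O(r^{k+\varepsilon})$, i.e.\ at the size of the main term, so these arcs are not harmless for the winding; (iii) near a crossing ray the right-hand side $b_1e^{p_1}(1+w)$ has zeros, but $f$ need not vanish there since $f^n=-P(z,f)+b_1e^{p_1}+b_2e^{p_2}$, so the ``local winding of $f$ produced by $1+w$'' is not justified. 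As it stands the proposal is a plausible research programme rather than a proof; to close it along the paper's lines, drop the winding count, estimate the linear combinations $F_1,F_2$ sector by sector, and invoke Phragm\'{e}n--Lindel\"{o}f.
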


\begin{proof}
For simplicity, we denote $P=P(z,f)$. By taking the derivatives on both sides of \eqref{EQ1}, we have
\begin{equation}\label{1Eq2.1}
nf^{n-1}f'+P'=b_1B_1e^{p_1}+b_2B_2e^{p_2},
\end{equation}
where $B_1=b_1'/b_1+p_1'$ and $B_2=b_2'/b_2+p_2'$. Obviously, $B_1B_2\not\equiv0$. By eliminating $e^{p_2}$ and $e^{p_1}$ from \eqref{EQ1} and \eqref{1Eq2.1}, respectively, we get
\begin{equation}\label{1Eq2.2}
b_2B_2f^n-nb_2f^{n-1}f'+b_2B_2 P-b_2P'=A_1{e^{p_1}},
\end{equation}
and
\begin{equation}\label{1Eq3.1}
b_1B_1f^n-nb_1f^{n-1}f'+b_1B_1 P-b_1P'=-A_1{e^{p_2}},
\end{equation}
where $A_1=b_1b_2(B_2-B_1)$. We see that $A_1\not\equiv0$, for otherwise we have $B_2=B_1$ and by integration we have $b_2e^{p_2}=Cb_1e^{p_1}$ for some nonzero constant $C$, which is impossible since $\alpha\not=1$. Denote $F_1=f'-(B_1/n)f$ and $F_2=f'-(B_2/n)f$.

Suppose that $\alpha$ is not real. Recall from \eqref{expon1} that $\delta(p_1,\theta)=\cos k\theta$ and $\delta(p_2,\theta)=a\cos k\theta-b\sin k\theta$. Denote by $S_{1,i}$ and $S_{2,j}$, $i,j=1,\cdots,2k$, the corresponding sectors for $p_1$ and $p_2$ defined in \eqref{expon10}, respectively. Then the sector $S_{1,i}$ where $\delta(p_1,\theta)>0$ intersects with one sector $S_{2,j}$ where $\delta(p_2,\theta)>0$ and also with one sector $S_{2,j}$ where $\delta(p_2,\theta)<0$.

Let $\theta\in[0,2\pi)$ be such that the ray $z=re^{i\theta}$ meets only finitely discs in $\hat{R}$. We first consider the growth of $F_1$ along the ray $z=re^{i\theta}$. From \eqref{1Eq2.2} we have
\begin{equation}\label{1Eq2.2fy0}
|F_1|\leq \left|\frac{B_2 P}{nf^{n-1}}\right|+\left|\frac{P'}{nf^{n-1}}\right|+\left|\frac{A_2{e^{p_1}}}{nb_2f^{n-1}}\right|.
\end{equation}
Note that $P$ and $P'$ are two differential polynomials in $f$ of degree~$\leq n-1$.
If along the ray $z=re^{i\theta}$ we have $\delta(p_1,\theta)<0$, then by using the method in the proof of \cite[Theorem~2.3]{Ishizalikazuya19971} together with Lemma~\ref{derivativelemma}, we may consider the two cases where $|f(re^{i\theta})|\leq 1$ and $|f(re^{i\theta})|> 1$, respectively, and show that
\begin{equation}\label{1Eq2.2fy1}
\left|F_1(re^{i\theta})\right|\leq e^{r^{\rho}}
\end{equation}
for all sufficiently large $r$. Let $\epsilon>0$ be given and small. If along the ray $z=re^{i\theta}$ we have $\delta(p_1,\theta)>0$ and $\delta(p_1,\theta)>\delta(p_2,\theta)$, then by Lemmas~\ref{derivativelemma} and~\ref{growthlemma}, we have from \eqref{1Eq2.2fy0} that
\begin{equation}\label{1Eq2.2fy2}
\left|F_1(re^{i\theta})\right|\leq e^{[\delta(p_1,\theta)/n]r^{k+\epsilon}}
\end{equation}
for all sufficiently large $r$. If along the ray $z=re^{i\theta}$ we have $\delta(p_2,\theta)>0$ and $\delta(p_2,\theta)>\delta(p_1,\theta)>(n-1)\delta(p_2,\theta)/n$, then by
Lemmas~\ref{derivativelemma} and~\ref{growthlemma}, we have from \eqref{1Eq2.2fy0} that
\begin{equation}\label{1Eq2.2fy3}
\left|F_1(re^{i\theta})\right|\leq e^{[\delta(p_1,\theta)-(n-1)\delta(p_2,\theta)/n]r^{k+\epsilon}}
\end{equation}
for all sufficiently large $r$. Further, if along the ray $z=re^{i\theta}$ we have $\delta(p_2,\theta)>0$ and $0<\delta(p_1,\theta)<(n-1)\delta(p_2,\theta)/n$, then by Lemmas~\ref{derivativelemma} and~\ref{growthlemma}, we have from \eqref{1Eq2.2fy0} that
\begin{equation}\label{1Eq2.2fy4}
\left|F_1(re^{i\theta})\right|\leq e^{r^{\rho}}
\end{equation}
for all sufficiently large $r$. Note that $F_1$ has only finitely many poles. With the estimates in \eqref{1Eq2.2fy1}--\eqref{1Eq2.2fy4}, we may apply Phragm\'{e}n--Lindel\"{o}f to $F_1$ as in the proof of \cite[Theorem~2.3]{Ishizalikazuya19971} and show that there is an $\rho_1$ such that $\rho<\rho_1<k$ and $|F_1(re^{i\theta})|\leq e^{r^{\rho_1}}$ for all $\theta$. Thus $\sigma(F_1)\leq \rho_1$. Here we omit the details. Similarly, we also have $\sigma(F_2)\leq \rho_1$. Since $f=\frac{n}{B_2-B_1}(F_1-F_2)$, then by comparing the orders on both sides, we get $\sigma(f)<k$. This is a contradiction to Lemma~\ref{growthlemmapre}. Therefore, $\alpha$ must be real.

\end{proof}



Now we begin to prove Theorem~\ref{maintheorem1}.

\renewcommand{\proofname}{Proof of Theorem~\ref{maintheorem1}.}

\begin{proof}

By Lemma~\ref{reallemma}, $\alpha$ is a nonzero real number such that $|\alpha|\leq1$. Below we consider the two cases where $-1\leq \alpha< 0$ and $0<\alpha<1$, respectively.

\vskip 4pt

\noindent\textbf{Case~1:} $-1\leq \alpha< 0$.

\vskip 4pt

By differentiating on both sides of \eqref{1Eq2.2} and then eliminating $e^{p_1}$ from \eqref{1Eq2.2} and the resulting equation, we get
\begin{equation}\label{1Eq2.3}
h_1 f^n+h_2f^{n-1}f'+h_3f^{n-2}(f')^2+h_4f^{n-1}f''+P_1=0,
\end{equation}
where
\begin{equation*}
P_1=(A_1'+p_1'A_1)(b_2B_2P-b_2P')-A_1(b_2B_2P-b_2P')'
\end{equation*}
is a differential polynomial in $f$ of degree $\leq n-1$, and
\begin{equation*}
\begin{split}
h_1&=b_2B_2(A_1'+p_1'A_1)-(b_2B_2)'A_1,\\
h_2&=-n b_2A_1(p_1'+p_2')-nb_2A_1',\\
h_3&=n(n-1)b_2A_1,\\
h_4&=nb_2A_1.
\end{split}
\end{equation*}
By multiplying both sides of equations \eqref{1Eq2.2} and \eqref{1Eq3.1} we have
\begin{equation}\label{1Eq3.2}
g_1 f^{2n}+g_2f^{2n-1}f'+g_3f^{2n-2}(f')^2+P_2=-A_1^2e^{p_1+p_2},
\end{equation}
where
\begin{equation*}
\begin{split}
P_2=&\, b_1b_2(B_2f^n-nf^{n-1}f')(B_1P-P')+b_1b_2(B_1P-P')(B_2P-P')\\
&+b_1b_2(B_1f^n-nf^{n-1}f')(B_2P-P')
\end{split}
\end{equation*}
is a differential polynomial in $f$ of degree $\leq 2n-1$, and
\begin{equation*}
\begin{split}
g_1&=b_1b_2B_1B_2,\\
g_2&=-nb_1b_2(B_1+B_2),\\
g_3&=n^2b_1b_2.
\end{split}
\end{equation*}
By eliminating $(f')^2$ from \eqref{1Eq2.3} and \eqref{1Eq3.2}, we get
\begin{equation}\label{1Eq3.3}
\begin{split}
f^{2n-1}\left[(g_3h_1-h_3g_1)f+(g_3h_2-h_3g_2)f'+g_3h_4f''\right]+P_3=h_3A_1^2e^{p_1+p_2},
\end{split}
\end{equation}
where $P_3=g_3f^nP_1-h_3P_2$ is a differential polynomial in $f$ of degree $\leq 2n-1$. For simplicity, denote
\begin{equation}\label{1Eq3.9a10a}
\varphi=\frac{h_3A_1^2}{g_3h_4}\frac{e^{p_1+p_2}}{f^{2n-1}}-\frac{1}{g_3h_4}\frac{P_3}{f^{2n-1}}.
\end{equation}
Then from equation \eqref{1Eq3.3} we have
\begin{equation}\label{1Eq3.9a1}
f''+H_1f'+H_2f=\varphi,
\end{equation}
where, recalling that $B_1=b_1'/b_1+p_1'$ and $B_2=b_2'/b_2+p_2'$,
\begin{equation}\label{1Eq3.9a1fu1}
\begin{split}
H_1&=\frac{h_2}{h_4}-\frac{g_2 h_3}{g_3h_4}=-\left[\frac{1}{n}(p_1'+p_2')-\frac{n-1}{n}\left(\frac{b_1'}{b_1}+\frac{b_2'}{b_2}\right)+\frac{A_1'}{A_1}\right],\\
H_2&=\frac{h_1}{h_4}-\frac{g_1 h_3}{g_3h_4}=\frac{1}{n}\left[B_2\left(\frac{A_1'}{A_1}-\frac{b_1'}{b_1}\right)-\frac{(b_2B_2)'}{b_2}\right]+\frac{1}{n^2}B_1B_2.
\end{split}
\end{equation}
Note that $\varphi$ has only finitely many poles. 
By Lemma~\ref{growthlemmapre} and the lemma on the logarithmic derivative, we obtain from \eqref{1Eq3.9a10a} that $m(r,\varphi)=O(r^{\rho})$ and hence $T(r,\varphi)=O(r^{\rho})$, i.e., $\varphi$ is a function of order $\leq \rho$.

By substituting $f''=-H_1f'-H_2f+\varphi$ into \eqref{1Eq2.3} we have
\begin{equation}\label{1Eq3.3zuih0}
f^{n-2}(f')^2=-\frac{g_1}{g_3}f^n-\frac{g_2}{g_3}f^{n-1}f'+\varphi f^{n-1}-\frac{1}{h_3}P_1.
\end{equation}
By taking the derivatives on both sides of \eqref{1Eq3.3zuih0} and substituting $f''=-H_1f'-H_2f+\varphi$ into the resulting equation inductively, we finally obtain
\begin{equation}\label{1Eq3.3zuih1}
f^{n-j}(f')^j=h_{j,1}f^n+h_{j,2}f^{n-1}f'+P_{1,j-1},  \quad 2\leq j\leq n,
\end{equation}
where $h_{j,1}$ and $h_{j,2}$ are functions of order~$\leq\rho$ and $P_{1,j-1}$ is a differential polynomial in $f$ with degree at most $n-1$. Recall that $F_1=f'-(B_1/n)f$ and $F_2=f'-(B_2/n)f$. Then we can rewrite \eqref{1Eq3.2} as
\begin{equation}\label{1Eq3.3zuih2}
f^{2n-2}F_1F_2+\hat{P}_2=-\frac{A_1^2e^{p_1+p_2}}{n^2b_1b_2},
\end{equation}
where $\hat{P}_2=P_2/(n^2b_1b_2)$. By substituting $f''=-H_1f'-H_2f+\varphi$ and its derivatives into $\hat{P}_2$, we can write $\hat{P}_2$ as $\hat{P}_2=\sum_{j=0}^{n-1}a_jf^{2n-1-j}(f')^{j}+\hat{P}_3$, where $a_j$ are functions of order~$\leq\rho$ and $\hat{P}_3$ is a differential polynomial in $f$ with degree at most $2n-2$. Then by substituting the expressions in \eqref{1Eq3.3zuih1} into $P_2$ we can rewrite $\hat{P}_2$ as $\hat{P}_2=\kappa_1f^{2n-1}+\kappa_2f^{2n-2}f'+\hat{P}_4$, where $\kappa_1$ and $\kappa_2$ are functions of order~$\leq\rho$ and $\hat{P}_4$ is a differential polynomial in $f$ with degree at most $2n-2$. Denote $\phi=F_1F_2+\kappa_1f+\kappa_2f'$. 
By dividing $f^{2n-2}$ on both sides of \eqref{1Eq3.3zuih2} and using Lemma~\ref{growthlemmapre} together with the lemma on the logarithmic derivative, we obtain that $m(r,\phi)=O(r^{\rho})$ and so $\phi$ is a function of order~$\leq\rho$. From the expressions of $F_1$ and $F_2$, we easily obtain
\begin{equation}\label{1Eq3.3zuih3jh}
\begin{split}
f=\frac{n}{B_2-B_1}(F_1-F_2), \quad f'=\frac{B_2}{B_2-B_1}F_1-\frac{B_1}{B_2-B_1}F_2.
\end{split}
\end{equation}
By substituting the above two expressions into the equation $\phi=F_1F_2+\gamma_1f+\gamma_2f'$ we get
\begin{equation}\label{1Eq3.3zuih4}
F_1F_2+\mu F_1+\nu F_2=\phi,
\end{equation}
where
\begin{equation*}
\begin{split}
\mu=\frac{n\kappa_1+B_2\kappa_2}{n(B_2-B_1)}, \quad
\nu=\frac{n\kappa_1+B_1\kappa_2}{n(B_1-B_2)}.
\end{split}
\end{equation*}
Obviously, $\mu,\nu$ are both functions of order~$\leq\rho$. We claim that $\phi+\mu\nu\not\equiv0$. Otherwise, from \eqref{1Eq3.3zuih4} we have $(F_1+\nu)(F_2+\mu)\equiv0$, which implies $F_1+\nu\equiv0$ or $F_2+\mu\equiv0$. Suppose $F_1\equiv-\nu$, for example. Then $F_2=[(B_1-B_2)/n]f-\nu$ and it follows by substituting this equation into \eqref{1Eq2.2} and recalling $A_1=b_1b_2(B_2-B_1)$ that
\begin{equation*}
f^{n}-\frac{n\nu}{B_1-B_2}f^{n-1}-\frac{1}{B_1-B_2}(B_2 P-P')=b_1{e^{p_1}}.
\end{equation*}
By Hayman's generalization of the theorem of Tumura--Clunie, there is a function $\gamma$ of order~$\leq\rho$ such that $(f-\gamma)^n=b_1e^{p_1}$. Thus we have $nT(r,f)=T(r,e^{p_1})+O(r^{\rho})$; however, by Steinmetz's result~\cite{Steinmetz1978} we have from equation \eqref{EQ1} that $nT(r,f)+O(r^{\rho})=(1-\alpha)(1+o(1))T(r,e^{p_1})$, which yields that $T(r,e^{p_1})=O(r^{\rho})$, a contradiction. Therefore, $\phi+\mu\nu\not\equiv0$ and also $(F_1+\nu)(F_2+\mu)\not\equiv0$. We write $F_2=(\phi-\mu F_1)/(F_1+\nu)$ and it follows that $T(r,F_2)=T(r,F_1)+O(r^{\rho})$. Then from the first equation in \eqref{1Eq3.3zuih3jh}, we have
\begin{equation*}
T(r,f)\leq{T(r,F_1)+T(r,F_2)+O(r^{\rho})}\leq{2T(r,F_1)+O(r^{\rho})}.
\end{equation*}
On the other hand, by the lemma on the logarithmic derivative, we also have
\begin{equation*}
\begin{split}
m(r,F_1)\leq m\left(r,f\left(\frac{f'}{f}-\frac{1}{n}B_1\right)\right)\leq m(r,f)+O(r^{\rho}),
\end{split}
\end{equation*}
and, since $f$ is transcendental entire, we have $T(r,F_1)\leq T(r,f)+O(r^{\rho})$. Thus $F_1$ and $F_2$ are both functions of order $k$. Recall that $B_1=b_1'/b_1+p_1'$ and $B_2=b_2'/b_2+p_2'$. By simple computations, we may rewrite \eqref{1Eq3.9a1} in the following way:
\begin{equation}\label{1Eq3.3zuih5}
\begin{split}
F_2'-\left(\frac{1}{n}p_1'-\frac{b_2'}{b_2}-\frac{n-1}{n}\frac{b_1'}{b_1}+\frac{A_1'}{A_1}\right)F_2=\varphi.
\end{split}
\end{equation}
Substituting $F_2=(\phi-\mu F_1)/(F_1+\nu)$ into equation \eqref{1Eq3.3zuih5} gives
\begin{equation*}
\begin{split}
\left[-\mu'+\left(\frac{1}{n}p_1'-\frac{b_2'}{b_2}-\frac{n-1}{n}\frac{b_1'}{b_1}+\frac{A_1'}{A_1}\right)\mu+\varphi\right]F_1^2=P(z,F_1),
\end{split}
\end{equation*}
where $P(z,F_1)$ is a linear polynomial in $F_1$ and $F_1'$ with coefficients formulated in terms of $\mu,\nu,\varphi,\phi$ and their derivatives. For simplicity, denote $\xi=p_1'/n-b_2'/b_2-(n-1)b_1'/nb_1+A_1'/A_1$ and $\psi=-\mu'+\xi\mu+\varphi$. Then $\psi$ is a function of order $\leq \rho$. We claim $\psi\equiv0$.
Otherwise, by Clunie's lemma (see \cite{Banklaine1977,Clunie1962meromorphic}; or \cite{Hayman1964Meromorphic}), we obtain from the above equation that $m(r,{\psi}F_1)=O(r^{\rho})$. But then $T(r,F_1)=m(r,F_1)=m(r,{\psi}F_1/\psi)\leq{m(r,1/\psi)+m(r,{\psi}F_1)}=O(r^{\rho})$,
which is absurd. Hence $\psi\equiv0$. By substituting
$\varphi=\mu'-\xi\mu$ into the second equation of \eqref{1Eq3.3zuih5}, we get $(F_2-\mu)'=\xi(F_2-\mu)$. We see that $b_1$ must be an $n$-square of some polynomial. By integration, we have $F_2-\mu=-C_1A_1b_1^{1/n}/b_1b_2e^{p_1/n}$ for some nonzero
constant $C_1$. By similar arguments, we also have that $b_2$ is a $n$-square of some polynomial and $F_1-\nu=C_2A_1b_2^{1/n}/b_2b_1e^{p_2/n}$ for some nonzero constant $C_2$. Together with $A_1=b_1b_2(B_2-B_1)$, we have from the first equation of \eqref{1Eq3.3zuih3jh} that $f=c_1b_1^{1/n}e^{p_1/n}+c_2b_2^{1/n}e^{p_2/n}+\nu-\mu$. By substituting this solution into \eqref{EQ1} and applying Borel's lemma to the resulting equation as in the proof of \cite[Theorem~1.1]{zhang2021}, we easily obtain that $\alpha=-1$ and $c_1^n=c_2^n=1$. We omit those details.

\vskip 4pt

\noindent\textbf{Case~2:} $0<\alpha<1$.

\vskip 4pt

In this case, we let $m$ be the smallest integer such that $\alpha \leq[(m+1)n-1]/[(m+1)n]$ and  ${\iota_0,\cdots,\iota_m}$ be a finite sequence of functions such that
\begin{equation}\label{1Eq3.27a prepare0}
\begin{split}
\iota_0&=\frac{A_1}{nb_1},\\
\iota_j&=(-1)^{j}\left(\frac{A_1}{nb_1}\right)^{j+1}(jn-1)\cdots(n-1), \quad j=1,2,\cdots,m.
\end{split}
\end{equation}
Recall that $B_1=b_1'/b_1+p_1'$. We also let ${\kappa_0,\cdots,\kappa_m}$ be a finite sequence of functions defined in the following way:
\begin{equation}\label{1Eq3.27a prepare}
\begin{split}
\kappa_0&=\frac{1}{n}\frac{b_1'}{b_1}+\frac{1}{n}p_1',\\
\kappa_j&=\frac{\iota_{j-1}'}{\iota_{j-1}}-\frac{jn-1}{n}\frac{b_1'}{b_1}+\left[j(\alpha-1)+\frac{1}{n}\right]p_1', \quad j=1,2,\cdots,m.
\end{split}
\end{equation}
Then we define $m+1$ functions $G_0$, $G_1$, $\cdots$, $G_m$ in the way that $G_0=f'-\kappa_0f$, $G_{1}=G_{0}'-\kappa_1G_{0}$, $\cdots$, $G_{m}=G_{m-1}'-\kappa_mG_{m-1}$. Now we have equation \eqref{1Eq3.1} and it follows that
\begin{equation}\label{1Eq3.27a}
G_0=f'-\kappa_0f=\iota_0\frac{{e^{p_2}}}{f^{n-1}}+W_0,
\end{equation}
where $W_0=-(B_1P-P')/(nf^{n-1})$. Moreover, when $m\geq 1$, by simple computations we obtain
\begin{equation*}
\begin{split}
G_1=G_0'-\kappa_1G_0=\iota_1\frac{e^{2p_2}}{f^{2n-1}}+W_1,
\end{split}
\end{equation*}
where
\begin{equation*}
W_1=W'_0-\kappa_1W_0-(n-1)\iota_0\frac{e^{p_2}}{f^n}W_0,
\end{equation*}
and by induction that
\begin{equation}\label{1Eq3.27}
\begin{split}
G_{j}=G_{j-1}'-\kappa_jG_{j-1}=\iota_j\frac{e^{(j+1)p_2}}{f^{(j+1)n-1}}+W_{j}, \quad j=1,\cdots,m,
\end{split}
\end{equation}
where
\begin{equation}\label{1Eq3.27a13}
W_{j}=W'_{j-1}-\kappa_jW_{j-1}-(jn-1)\iota_{j-1}\frac{e^{jp_2}}{f^{jn}}W_0, \quad j=1,\cdots,m.
\end{equation}
For an integer $l\geq0$, by elementary computations it is easy to show that $W_0^{(l)}=W_{0l}/f^{n+l-1}$, where $W_{0l}=W_{0l}(z,f)$ is a differential polynomial in $f$ of degree $\leq n+l-1$, and also that $(e^{p_2}/f^{n})^{(l)}=e^{p_2}W_{1l}/f^{n+l}$, where $W_{1l}=W_{1l}(z,f)$ is a differential polynomial in $f$ of degree $\leq n+l$. We see that $W_j$, $1\leq j\leq m$, is formulated in terms of $W_0$ and $e^{p_2}/f^{n}$ and their derivatives. We may write
\begin{equation}\label{1Eq3.28a1}
G_m=\iota_m\frac{e^{(m+1)p_2}}{f^{(m+1)n-1}}+F(W_0,e^{p_2}/f^{n}),
\end{equation}
where $F(W_0,e^{p_2}/f^{n})$ is a combination of $W_0$ and $e^{p_2}/f^{n}$ and their derivatives with functions of order~$\leq \rho$ as coefficients. Note that $G_m$ has only finitely many poles. 
By Lemma~\ref{growthlemmapre} and the lemma on the logarithmic derivative, we obtain from \eqref{1Eq3.28a1} that $m(r,G_m)=O(r^{\rho})$ and hence $T(r,G_m)=O(r^{\rho})$, i.e., $G_m$ is a function of order $\leq \rho$. We denote $\varphi=G_m$.



We denote $D_0=b_1^{1/n}$ and $D_j=\iota_{j-1}b_1^{-j}b_1^{1/n}$, $j=1,\cdots,m$. Since $\delta(p_1,\theta)=\cos k\theta$, we may choose $\theta_1=\pi/(2k)$. Since $\alpha>0$, we see that $S_{1,i}$ for $p_1$ and $S_{2,j}$ for $p_2$ defined in \eqref{expon10} are identical. For simplicity, we denote these sectors by $S_i$, $i=1,2,\cdots,2k$, and suppose that $\delta(p_1,\theta)\geq 0$ for $\theta$ in the sectors $S_{i}$ where $i=2,4,\cdots,2k$. Now we choose one $\theta$ such that $\delta(p_1,\theta)>0$ and let $z=re^{i\theta}\in S_{2k}$. We may suppose that $f(z)\not=0,\infty$ on the ray $z=re^{i\theta}$ and also that $0$ is not a pole of $D_0$ or $D_j$ for otherwise we do the translation $z\to z+c$ for a suitable constant $c$. Let $t_0=1/n$, $t_1=(\alpha-1)+1/n$, $\cdots$, $t_m=m(\alpha-1)+1/n$. By Theorem~\ref{maintheorem0} together with its remark, we may integrate $G_{m}=G_{m-1}'-\kappa_{m} G_{m-1}$ along the ray $z=re^{i\theta}$ to obtain
\begin{equation}\label{1Eq3.28a2 houf}
G_{m-1}=C_0D_me^{t_mp_1}+H_m,
\end{equation}
where $C_0$ is a constant and
\begin{equation}\label{1Eq3.28a2 houf0}
H_m=D_me^{t_mp_1}\int_0^{z}D_m^{-1}e^{-t_mp_1}\varphi ds-a_mD_me^{t_mp_1},
\end{equation}
where $a_m=a_m(\theta)$ is a constant such that $|H_m|=O(e^{r^{\rho}})$ along the ray $z=re^{i\theta}$.
Together with the definitions in \eqref{1Eq3.27a prepare0} and \eqref{1Eq3.27a prepare}, we may integrate the recursion formulas $G_j=G_{j-1}'-\kappa_jG_{j-1}$ from $j=m$ to $j=1$ along the ray $z=re^{i\theta}$ inductively and finally integrate $G_0=f'-\kappa_0f$ along the ray $z=re^{i\theta}$ to obtain
\begin{equation}\label{1Eq3.28a2}
f=b_1^{1/n}\sum_{i=0}^mc_i\left(\frac{b_2}{b_1}\right)^ie^{t_ip_1}+H_0,
\end{equation}
where $c_0$, $\cdots$, $c_m$ are constants and
\begin{equation}\label{1Eq3.28a5}
\begin{split}
H_{0}=b_1^{1/n}e^{t_0p_1}\int_{0}^{z}b_1^{-1/n}e^{-t_0p_1}H_1ds-a_{0}b_1^{1/n}e^{t_0p_1},
\end{split}
\end{equation}
where $a_0=a_{0}(\theta)$ is a constant such that $|H_0|=O(e^{r^{\rho}})$ along the ray $z=re^{i\theta}$.

Now, from the recursion formula $G_j=G_{j-1}'-\kappa_jG_{j-1}$, $j\geq 1$, and $G_0=f'-\kappa_0f$, we easily deduce that $f$ satisfies the linear differential equation
\begin{equation}\label{1Eq3.28lin}
f^{(m+1)}-\hat{t}_{m}f^{(m)}+\cdots+(-1)^{m+1}\hat{t}_0f=\varphi,
\end{equation}
where $\hat{t}_m$, $\hat{t}_{m-1}$, $\cdots$, $\hat{t}_0$ are functions formulated in terms of $\kappa_0$, $\cdots$, $\kappa_m$ and their derivatives. From the integrations in \eqref{1Eq3.28a2 houf0}--\eqref{1Eq3.28a5} we see that the general solutions of the corresponding homogeneous linear differential equation of \eqref{1Eq3.28lin} are defined on a finite-sheeted Riemann surface and are of the form $f=b_1^{1/n}\sum_{i=0}^mc_i(b_2/b_1)^ie^{t_ip_1}$, where $b_1^{1/n}$ is in general an algebraic function (see \cite{Katajamaki1993algebroid} for the theory of algebroid functions). Suppose that $\gamma$ is a particular solution of \eqref{1Eq3.28lin}. We may write the entire solution of \eqref{EQ1} as $f=b_1^{1/n}\sum_{i=0}^mc_i(b_2/b_1)^ie^{t_ip_1}+\gamma$. By an elementary series expansion analysis around the zeros of $b_1$, we conclude that $\gamma/b_1^{1/n}$ is a meromorphic function. This implies that $b_1$ is an $n$-square of some polynomial. Therefore, we can integrate $G_j=G_{j-1}'-\kappa_jG_{j-1}$ from $j=m$ to $j=1$ inductively and finally integrate $G_0=f'-\kappa_0f$ to obtain that $H_m$, $\cdots$, $H_0$ in \eqref{1Eq3.28a2 houf0}--\eqref{1Eq3.28a5} are meromorphic functions with at most finitely many poles. We choose $\gamma=H_0$. Recall that along the ray $z=re^{i\theta}$ such that $\delta(p_1,\theta)>0$ and $z=re^{i\theta}\in S_{2k}$, we have $|H_0|=O(e^{r^{\rho}})$. Denote $g=b_1^{1/n}\sum_{i=0}^mc_i(b_2/b_1)^ie^{t_ip_1}$. Then
\begin{equation*}
\begin{split}
g^n=b_1\sum_{k_0=0}^{mn}C_{k_0}\left(\frac{b_2}{b_1}\right)^{k_0}e^{(k_0t-k_0+1)p_1},
\end{split}
\end{equation*}
where $C_{k_0}=\sum_{\substack{j_0+\cdots+j_m=n,\\j_1+\cdots+mj_m=k_0}}\frac{n!}{j_0!j_1!\cdots j_m!}c_0^{j_0}c_1^{j_1}\cdots c_m^{j_m}$, $k_0=0,1,\cdots,mn$.
By Lemma~\ref{derivativelemma}, we may suppose that along the ray $z=re^{i\theta}$ we have $|f^{(j)}/f|=r^{j(k-1+\varepsilon)}$ for all $j>0$ for all sufficiently large $r$.
Then by writing $P$ in the form in \eqref{lemmaQ10} and using Lemma~\ref{derivativelemma} we may write each of the monomials in $P(z,f)$ of degree $n-j$, $1\leq j\leq n-1$, as a linear combination of exponential functions of the form $e^{[nk_{j}(\alpha -1)+n-j]p_1/n}$, $0\leq k_{j}\leq (n-j)m$, with coefficients $\beta_j$ of growth of type $O(e^{r^{\rho}})$ along the ray $z=re^{i\theta}$. Therefore, by substituting $f=g+H_0$ into \eqref{EQ1} we obtain,
when $m=0$,
\begin{equation}\label{1Eq3.29a}
\begin{split}
(c_0^n-1)b_1e^{p_1}-b_2e^{p_2}+\sum_{j=1}^{n}\beta_{j}e^{\frac{n-j}{n}p_1}=0,
\end{split}
\end{equation}
where $\beta_{j}$, $1\leq j\leq n$, are functions of growth of type $O(e^{r^{\rho}})$ along the ray $z=re^{i\theta}$, and, when $m\geq1$,
\begin{equation}\label{1Eq3.29}
\begin{split}
&(c_0^n-1)b_1e^{p_1}+[nc_0^{n-1}c_1e^{\alpha p_1-\alpha z^k}-e^{p_2-\alpha z^k}]b_2e^{\alpha z^k}\\
&+b_1\sum_{k_0=2}^{mn}C_{k_0}\left(\frac{b_2}{b_1}\right)^{k_0}e^{(k_0\alpha -k_0+1)p_1}+\sum_{j=1}^{n}\sum_{k_j=0}^{m(n-j)}\beta_{j,k_j}e^{[nk_{j}(\alpha-1)+n-j]p_1/n}=0,
\end{split}
\end{equation}
where $\beta_{j,k_j}$, $1\leq j\leq n$, $0\leq k_j\leq m(n-j)$, are functions of growth of type
$O(e^{r^{\rho}})$ along the ray $z=re^{i\theta}$. Note that in \eqref{1Eq3.29}, for each $j\geq0$, $[nk_{j}(\alpha-1)+n-j]/n$ decreases strictly as $k_j$ varies from $0$ to $(n-j)m$. Denote $\hat{c}(z)=nc_0^{n-1}c_1e^{\alpha p_1-\alpha z^k}-e^{p_2-\alpha z^k}$. Since $\rho_1=\rho(\hat{c}(z))\leq n-1$, then by our assumption on the $\hat{R}$-set, we have $e^{-r^{\rho_1+\varepsilon}}\leq |\hat{c}(z)|\leq e^{r^{\rho_1+\varepsilon}}$ for all $r$ outside a set of finite linear measure, provided that $\hat{c}(z)$ is not identically zero. We rewrite the left-hand sides of equation \eqref{1Eq3.29a} or \eqref{1Eq3.29} by combining the same exponential terms together. In so doing, by letting $z\to\infty$ along the ray $z=re^{i\theta}$ and comparing the growth on both sides of equation \eqref{1Eq3.29a} or \eqref{1Eq3.29} we conclude that the coefficient of the term which dominates the growth of the resulting equation along the ray $z=re^{i\theta}$ must be zero. Then from \eqref{1Eq3.29a} we have $c_0^n=1$; for \eqref{1Eq3.29}, noting that $k_0\alpha-k_0+1>1-k_0/mn\geq (n-1)/n$ for all $0\leq k_0 \leq m$ and also that $[nk_{j}(\alpha-1)+n-j]/n\leq (n-1)/n$ for all $j\geq 1$ and $k_j\geq 0$ since $(mn-1)/mn<\alpha \leq [(m+1)n-1]/(m+1)n$, we must have $c_0^{n}=1$, $nc_0^{n-1}c_1e^{\alpha p_1-\alpha z^k}-e^{p_2-\alpha z^k}\equiv0$ and further that $C_{k_0}\equiv0$ for all $2\leq k_0\leq m$ when $m\geq 2$. This implies that $p_2=\alpha p_1$ since $p_1(0)=p_2(0)=0$.

For a given $\epsilon>0$, denote by $S_{i,\epsilon}$, $i=1,\cdots,2k$, the corresponding sectors for $p_1$ defined in \eqref{expon10 fujia}. Now, since $\varphi$ and $D_j$ only have finitely may poles, by Theorem~\ref{maintheorem0} together with its remarks and looking at the calculations to obtain $H_0$ in \eqref{1Eq3.28a5}, we have, for $z\in S_{i,\epsilon}$, $i\in\{2,4,\cdots,2k\}$, such that $\delta(p_1,\theta)>0$, $H_0=b_1^{1/n}\sum_{l=0}^ma_{l,i}(b_2/b_1)^ie^{t_ip_1}+\gamma_{i}$, where $a_{l,i}$, $l=0,\cdots,m$, are some constants related to a sector $S_{i,\epsilon}$ and $|\gamma_i|=O(e^{r^{\rho}})$ uniformly for $z\in\overline{S}_{i,\epsilon}$ and $r$ is large. Of course, for $i=2k$, we have $a_{l,2k}=0$ for all $l$.
Therefore, by considering the growth of $f$ as to \eqref{1Eq3.29a} or \eqref{1Eq3.29} but now along the ray $z=re^{i\theta}$ such that $z\in S_{i,\epsilon}$, $i\in\{2,\cdots,2k\}$, and $\delta(p_1,\theta)>0$, we have $(c_0+a_{0,i})^n=1$ when $m=0$, $(c_0+a_{0,i})^n=n(c_0+a_{0,i})^{n-1}(c_1+a_{1,i})=1$ when $m=1$ and further that $\hat{C}_{k_0}=\sum_{\substack{j_0+\cdots+j_m=n,\\j_1+\cdots+mj_m=k_0}}\frac{n!}{j_0!j_1!\cdots j_m!}(c_0+a_{0,i})^{j_0}(c_1+a_{1,i})^{j_1}\cdots (c_m+a_{m,i})^{j_m}=0$ for $k_0=2,\cdots,m$ when $m\geq 2$.
For each $i\in\{2,\cdots,2k\}$, since $j_{k_0}\leq 1$ and $j_{k_0+1}=\cdots=j_m=0$, it is easy to see that there is an $\omega_{l,i}$ satisfying $\omega_{l,i}^n=1$ such that $c_l+a_{l,i}=\omega_{l,i} c_l$, $l=0,\cdots,m$. Note that when $i=2k$ we have $\omega_{l,i}=1$ for all $l=0,\cdots,m$.

Since $f=b_1^{1/n}\sum_{i=0}^mc_i(b_2/b_1)^ie^{t_ip_1}+H_0$, then by considering the growth of $P/f^{n-1}$ along the ray $z=re^{i\theta}$ such that $\delta(p_1,\theta)>0$, we find that $P/f^{n-1}=n\mu+\nu e^{-p_1/n}(1+o(1))$ for some functions $\mu$ and $\nu$ of order~$\leq \rho$. Further, by substituting $f=b_1^{1/n}\sum_{i=0}^mc_i(b_2/b_1)^ie^{t_ip_1}+H_0$ into $W_0=-(B_1P-P')/(nf^{n-1})$ and considering the growth along the ray $z=re^{i\theta}$ such that $\delta(p_1,\theta)>0$, we find that $W_0=\mu'-\kappa_0\mu+\nu_0e^{-p_1/n}(1+o(1))$ as $z\to\infty$ for some function~$\nu_0$. Since $\alpha<1$, we
have that $e^{jp_2}/f^{jn}= (c_0^nb_1)^{-j}e^{-j(1-\alpha')r^k}(1+o(1))$ for some $\alpha'>\alpha$, $j\geq 1$, as $z\to\infty$ along the ray $z=re^{i\theta}$ such that $\delta(p_1,\theta)>0$. Then we substitute $f=b_1^{1/n}\sum_{i=0}^mc_i(b_2/b_1)^ie^{t_ip_1}+H_0$ into the expressions for $G_0$, $G_1$, $\cdots$, $G_m$ in \eqref{1Eq3.28a1} inductively and consider the growth on both sides of the resulting equation along the ray $z=re^{i\theta}$ such that $\theta\in S_{i,\epsilon}$ and $\delta(p_1,\theta)>0$ as to
$\hat{c}(z)=nc_0^{n-1}c_1e^{\alpha p_1-\alpha z^k}-e^{p_2-\alpha z^k}$ in equation \eqref{1Eq3.29} to obtain: (1) if $\alpha<[(m+1)n-1]/[(m+1)n]$, then $G_m\equiv \mu^{(m+1)}-\hat{t}_{m}\mu^{(m)}+\cdots+(-1)^{m+1}\hat{t}_0\mu$; or (2) if $\alpha=[(m+1)n-1]/[(m+1)n]$, then $G_m=\iota_{m}/(\omega_{0,i} c_0b_1^{1/n})^{(m+1)n-1}+\mu^{(m+1)}-\hat{t}_{m}\mu^{(m)}+\cdots+(-1)^{m+1}\hat{t}_0\mu$. In the first case, $\mu$ is a particular solution of \eqref{1Eq3.28lin}; in the latter case, since $\iota_m\not\equiv0$, we see that $\omega_{0,i}=1$ for all $i=2,\cdots,2k$ and it follows that $\omega_{l,i}=1$ for all $l=0,\cdots,m$ and $i=2,\cdots,2k$. We conclude that $f=b_1^{1/n}\sum_{i=0}^mc_i(b_2/b_1)^ie^{t_ip_1}+\mu$ or $\omega_{0,i}=1$ for all $i=2,\cdots,2k$ and thus $f=b_1^{1/n}\sum_{i=0}^mc_i(b_2/b_1)^ie^{t_ip_1}+H_0$ with $a_{l,i}$, $l=0,1,\cdots,m$, $i=2,4,\cdots,2k$, being all equal to~$0$. Now the function $H_0$ satisfies $|H_0|=O(e^{r^{\rho}})$ uniformly for $z\in \overline{S}_{i,\epsilon}$ where $\delta(p_1,\theta)> 0$ and $r$ is large. Moreover, by Theorem~\ref{maintheorem0} together with its remarks we see that $|H_m|=O(e^{r^{\rho}})$ uniformly for $z\in \overline{S}_{i,\epsilon}$ where $\delta(p_1,\theta)< 0$ and $r$ is large, and from the integrations \eqref{1Eq3.28a2 houf0}--\eqref{1Eq3.28a5} we finally have that $|H_0|=O(e^{r^{\rho}})$ uniformly for $z\in \overline{S}_{i,\epsilon}$ where $\delta(p_1,\theta)< 0$ and $r$ is large. Therefore, for the given $\epsilon>0$, we have $|H_0(z)|=O(e^{r^{\rho}})$ for all $z\in \cup_{i=0}^{2n}\overline{S}_{i,\epsilon}$. Since $\epsilon>0$ can be arbitrarily small, then by the Phragm\'{e}n--Lindel\"{o}f theorem, we conclude that $H_0$ is a function of order~$\leq \rho$. We complete the proof.


\end{proof}

\section{Further results about linear differential equations}\label{Application to complex dynamics}

In this section, we use the Phragm\'{e}n--Lindel\"{o}f theorem to prove a theorem on a class of meromorphic functions appearing in complex dynamics, namely \emph{class $N$}; see \cite{Baker1984,Bergweiler1992,Bergweiler1993}. For a meromorphic function $f$, we say that a point $z_0$ is a \emph{superattracting} fixed point of $f$ if $f(z_0)=z_0$ and $f'(z_0)=0$. A meromorphic function $f$ in $N$ has the following properties: $f$ has finitely many poles; $f'$ has finitely many multiple zeros; the superattracting fixed points of $f$ are zeros of $f'$ and vice versa, with finitely many exceptions; $f$ has finite order. Then a meromorphic function is in $N$ if and only if $f$ satisfies the first-order differential equation:
\begin{equation}\label{classNequatoin1}
f'(z)=\frac{q_1(z)}{q_2(z)}e^{q_3(z)}(f(z)-z),
\end{equation}
for some polynomials $q_1$, $q_2$ and $q_3$. By denoting $F(z)=f(z)-z$, then \eqref{classNequatoin1} becomes $F'=(q_1/q_2)e^{q_3}F-1$. We claim that $q_3$ must be a constant. More generally, consider the equation
\begin{equation}\label{classNequatoin2}
F'=R_1e^{q(z)}F+R_2,
\end{equation}
where $R_1$ and $R_2$ are two rational functions and $q(z)$ is a polynomial. We prove

\begin{theorem}\label{classNth}
Suppose that equation \eqref{classNequatoin2} has a nonconstant meromorphic solution of finite order. Then $q$ is a constant.
\end{theorem}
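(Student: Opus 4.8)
The plan is to argue by contradiction: assume $q$ is nonconstant, say $\deg q=k\ge 1$, and let $F$ be a nonconstant meromorphic solution of \eqref{classNequatoin2} of finite order $\rho=\rho(F)$. First I would record two structural facts. Comparing the orders of the two sides of \eqref{classNequatoin2} at a pole $z_0$ of $F$ shows that $F'$ there has pole order one larger than $R_1e^{q}F$, so $z_0$ must be a pole of $R_1$ or $R_2$; hence $F$ has only finitely many poles. Moreover it suffices to prove that $F$ is a rational function: for then $e^{q}=(F'-R_2)/(R_1F)$ would be a rational function (note $F'\not\equiv R_2$, as otherwise $F\equiv 0$), which is impossible when $q$ is nonconstant. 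Thus the whole problem reduces to ruling out transcendental growth of a finite-order solution. As a byproduct, writing $R_1e^{q}=F'/F-R_2/F$ and applying the lemma on the logarithmic derivative gives $T(r,e^{q})\le m(r,1/F)+S(r,F)+O(\log r)$, whence $T(r,F)\gtrsim r^{k}$ and $\rho\ge k$; this will matter for the Phragm\'en--Lindel\"of step.

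Next I would estimate $F$ ray by ray, using the direction function $\delta(q,\theta)$ and the sectors $S_j$, $\overline{S}_{j,\epsilon}$ attached to $q$ as in the introduction, together with Gundersen's estimate (Lemma~\ref{derivativelemma}); let $E$ be the associated exceptional set of directions. On a ray $z=re^{i\theta}$ with $\theta\notin E$ and $\delta(q,\theta)>0$, I rewrite \eqref{classNequatoin2} as $R_2/F=F'/F-R_1e^{q}$; since $|F'/F|\le r^{\rho-1+\varepsilon}$ while $|R_1e^{q}|\ge e^{(1-\varepsilon)\delta(q,\theta)r^{k}}$ dominates it, I obtain $|F(re^{i\theta})|\le e^{-cr^{k}}\to 0$ there. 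On a ray with $\delta(q,\theta)<0$ the coefficient $R_1e^{q}$ decays like $e^{(1-\varepsilon)\delta(q,\theta)r^{k}}$, so I would instead integrate: with $Q(z)=\int_0^{z}R_1e^{q}\,ds$ the integrating factor $e^{-Q}$ gives $F=e^{Q}\bigl(c+\int_0^{z}R_2e^{-Q}\,ds\bigr)$, and along such a ray $Q$ tends to a finite limit while the particular integral grows at most like $r^{\deg R_2+1}$; hence $|F(re^{i\theta})|\le Cr^{N}$ with $N=\deg R_2+1$. These two sectorial bounds, obtained in the spirit of Theorem~\ref{maintheorem0}, are the technical heart of the argument.

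Finally I would assemble the estimates by a Phragm\'en--Lindel\"of argument. The difficulty is that the sectors $S_j$ have opening exactly $\pi/k$ while $\rho\ge k$, so a direct application on each $S_j$ sits precisely at the critical order. I would circumvent this by covering $[0,2\pi)$ with finitely many thin sectors, each of opening strictly less than $\pi/\rho$, choosing all dividing rays outside $E$ and off the finitely many directions where $\delta(q,\theta)=0$. On every dividing ray the bounds above give $|F|\le Cr^{N}$, and on a sector of opening $<\pi/\rho$ the order $\rho$ permits Phragm\'en--Lindel\"of, yielding $|F|\le C'r^{N}$ inside each thin sector and hence $|F(z)|\le C'|z|^{N}$ for all large $z$. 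Together with the finiteness of the pole set, this makes $F$ rational, the desired contradiction. The main obstacle is therefore not the assembly but establishing the two sectorial bounds robustly enough to feed Phragm\'en--Lindel\"of, in particular controlling $F'/F$ near the zeros of $F$ in the expanding sectors (handled pointwise by Lemma~\ref{derivativelemma}) and controlling the integral $\int_0^{z}R_2e^{-Q}\,ds$ in the decaying sectors.
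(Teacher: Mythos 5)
Your proposal is correct and follows the same overall route as the paper: assume $\deg q=k\ge 1$, split the plane into the sectors determined by the sign of $\delta(q,\theta)$, show $F$ is small in the expanding sectors and polynomially bounded in the decaying ones, and finish with Phragm\'en--Lindel\"of to force $F$ to be rational. The two places where you genuinely diverge are both sound. In the decaying sectors the paper does not integrate the equation; it first bounds $F'$ by showing that $|F'(z_l)|>|z_l|^{n_2+1}$ along a sequence would give $|F(z_l)/F'(z_l)|\le |z_l|+O(1)$, which upon dividing \eqref{classNequatoin2} by $F'$ yields the contradiction $1=o(1)$, and then recovers the bound on $F$ by a single integration. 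Your variation-of-constants formula $F=e^{Q}\bigl(c+\int R_2e^{-Q}\,ds\bigr)$ with $Q=\int R_1e^{q}\,ds$ is more direct, at the small cost of having to base the integration at a point on the ray beyond the poles of $R_1$ so that $Q$ is single-valued and convergent there. In the assembly, the paper obtains uniform estimates throughout each closed subsector $\overline{S}_{j,\epsilon}$ (via the maximum modulus principle in the expanding sectors, where it shows $F\to 0$) and applies Phragm\'en--Lindel\"of once to $h=F-R$; you instead only need estimates on finitely many well-chosen boundary rays and apply Phragm\'en--Lindel\"of on thin sectors of opening less than $\pi/(\rho+\varepsilon)$, which is legitimate since $h$ has finite order and avoids the maximum-modulus step. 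Two trivial caveats you should record: your expanding-sector estimate $|F|\le 2|R_2|\,e^{-(1-\varepsilon)\delta(q,\theta)r^{k}}$ presupposes $R_2\not\equiv 0$ (if $R_2\equiv 0$ the identity $F'/F=R_1e^{q}$ already contradicts the lemma on the logarithmic derivative), and your explicit finiteness argument for the poles of $F$, which the paper leaves implicit, is a welcome addition.
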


\renewcommand{\proofname}{Proof of Theorem~\ref{classNth}.}
\begin{proof}
The proof is a revised version of that in \cite{Baker1984} or in \cite{gundersenyang1998} since we need to deal with the poles of $f$. On the contrary, we suppose that $q(z)$ is a polynomial with degree $n\geq 1$. Then from \eqref{classNequatoin2} we see that $F$ is transcendental and has order of growth at least $n$. Now we recall the notation in section~\ref{introduction}. Denote the leading coefficient of $q(z)$ by $\alpha$ and write $\alpha=a+ib$ with $a,b$ real. Let $\theta_j$ be such that $\delta(q,\theta_j)=a\cos n\theta_j-b\sin n\theta_j=0$. Then $\theta_j$ divides the plane $\mathbb{C}$ into $2n$ sectors defined as in \eqref{expon10}. Without loss of generality, we may suppose that in the sectors $S_j$, where $j$ is odd, we have $\delta(q,\theta)>0$. Moreover, we suppose that all zeros and poles of $R_1$ and $R_2$ are contained in the disc $D:=\{z:|z|\leq r_0\}$ for some $r_0>0$. Then on the circle $|z|=r_0$, $g(z)$ is bounded. Let $\epsilon>0$ be given. We also recall the definition $S_{j,\epsilon}$ in \eqref{expon10 fujia}.

First, we consider $F(z)$ in a sector $\overline{S}_{j,\epsilon}$, where $j$ is odd. Let $\Gamma$ be a curve not intersecting the corresponding $R$--set of $F$ and contained in $\overline{S}_{j,\epsilon}\setminus D$. From the proof of Theorem~\ref{maintheorem0} we know that $|e^{q(z)}|\geq e^{c|z|^n}$ uniformly for all $z\in \overline{S}_{j,\epsilon}$ and some $c=c(j,\epsilon)>0$ and $|z|$ is large. By \cite[Proposition~5.12]{Laine1993} we know that $|F'/F|\leq r^{n_1}$ for some integer $n_1$ outside an $R$-set. We write equation \eqref{classNequatoin2} as
\begin{equation}\label{classNequatoin3}
e^{q}=\frac{1}{R_1}\frac{F'}{F}-\frac{R_2}{R_1}\frac{1}{F}.
\end{equation}
Then, for $z$ on the curve $\Gamma$ and $|z|=r$,
\begin{equation}\label{classNequatoin4}
\left|e^{q(z)}\right|\leq \left|\frac{1}{R_1(z)}\right|\left|\frac{F'(z)}{F(z)}\right|+|R_2(z)|\left|\frac{1}{F(z)}\right|\leq |z|^{n_1}+\left|\frac{R_2(z)}{R_1(z)}\right|\left|\frac{1}{F(z)}\right|.
\end{equation}
Therefore, for all $z$ on the curve $\Gamma$, we must have that $g(z)\to 0$ as $z\to\infty$ since otherwise the above inequality would imply that $e^{cr_k^n}\leq |r_k|^{n_1}+O(r_k^{m})$ as $r_k\to\infty$ for some infinite sequence $z_k=r_ke^{i\theta_k}$, where $m$ is an integer, which is impossible. Since $F(z)$ is analytic in the $R$--set corresponding to $F$ and outside $D$, we conclude by the maximum modulus principle that $F(z)\to 0$ as $z\to\infty$ for all $z\in \overline{S}_{j,\epsilon}\setminus D$, where $j$ is odd.

Second, we consider $F(z)$ in a sector $\overline{S}_{j,\epsilon}$, where $j$ is even. We write $R_2=P_1/P_2+P_3$ with three polynomials $P_1$, $P_2$ and $P_3$ such that $\deg(P_1)<\deg(P_2)$ and denote $\deg(P_3)=n_2$. It should be understood that $P_3\equiv0$ when $n_2=0$. Then along each ray $z=re^{i\theta}$ contained in the sector $\overline{S}_{j,\epsilon}$, we must have $|F'(z)|\leq |z|^{n_2+1}$ for all $z=re^{i\theta}$ and $|z|$ is large. In fact, if this is not true, then there is an infinite sequence $z_l=r_le^{i\theta_l}$ such that $|F'(z_l)|>|z_l|^{n_2+1}$ as $z_l\to\infty$ and moreover,
\begin{equation}\label{classNequatoin6be}
|F(z_l)|= \left|\int_{z_0}^{z_l}F'(t)dt-F(z_0)\right|\leq |F'(z_l)||z_l-z_0|+|F(z_0)|,
\end{equation}
which gives $|F(z_l)/F'(z_l)|\leq |z_l|+O(1)$. But then it follows by dividing $F'(z)$ on both sides of \eqref{classNequatoin2} that
\begin{equation*}
1=R_1e^{q}\frac{F}{F'}+R_2\frac{1}{F'},
\end{equation*}
and thus by the basic properties of $e^{q(z)}$ mentioned in the introduction we have
\begin{equation}\label{classNequatoin6}
1\leq \left|R_1(z_l)e^{q(z_l)}\right|\left|\frac{F(z_l)}{F'(z_l)}\right|+|R_2|\left|\frac{1}{F'(z_l)}\right|=o(1),
\end{equation}
a contradiction. Therefore, for all $z\in \overline{S}_{j,\epsilon}\setminus D$, $F'(z)$ is uniformly bounded by $|z|^{n_2+1}$ when $|z|$ is large. An elementary integration again shows that $|F(z)|\leq C|z|^{n_2+2}+O(1)$ holds uniformly for some $C$ and all $z\in \overline{S}_{j,\epsilon}\setminus D$.

From the above reasoning, we conclude that for a given $\epsilon>0$ and for all $z\in \overline{S}_{j,\epsilon}\setminus D$, $j=1,2,\cdots,2n$, and $|z|$ is large, we have $|F(z)|\leq |z|^{n_3+1}$ and $n_3=\max\{n_1,n_2+2\}$. Since $F(z)$ has only poles in the plane, there is some rational function $R(z)$ such that $R(z)\to0$ as $z\to\infty$ and that $h(z):=F(z)-R(z)$ is an entire function. Then the previous reasoning yields that $|h(z)|\leq |z|^{n_3}$ for all $z\in \overline{S}_{j,\epsilon}$, $j=1,\cdots,2n$, and $|z|$ is large. Since $\epsilon$ can be arbitrarily small, then the Phragm\'{e}n--Lindel\"{o}f theorem (see \cite[Theorem~7.3]{hollandasb}) implies that $h$ is a polynomial and so $F$ is a rational function, a contradiction. Therefore, $q$ must be a constant.

\end{proof}


\noindent{\bf Acknowledgements.} The author would like to thank professor Janne Heittokangas of University of Eastern Finland (UEF) for introducing Steinmetz's paper~\cite{Steinmetz1978} to him during his visit to UEF in~2016--2018. The author would also like to thank professor Jianhua Zheng of Tsinghua University for pointing out a mistake in the proof of Theorem~\ref{maintheorem0} in the first version of the manuscript. The author also thank the referees for their very valuable suggestions for improving the original manuscript.


\bibliographystyle{amsplain}

\end{document}